\theoremstyle{plain}
\newtheorem{theorem}{Theorem}[section]
\newtheorem{lemma}[theorem]{Lemma}
\newtheorem{proposition}[theorem]{Proposition}
\theoremstyle{definition} 
\newtheorem{remark}[theorem]{Remark}
\begin{document}

\author{Matteo Dalla Riva\thanks{Dipartimento di Ingegneria, Universit\`a degli Studi di Palermo, Viale delle Scienze, Ed. 8, 90128 Palermo, Italy. Email: matteo.dallariva@unipa.it} ,  Paolo Luzzini\thanks{Dipartimento di Matematica `Tullio Levi-Civita', Universit\`a degli Studi di Padova, Via Trieste 63, 35121 Padova, Italy. Email: pluzzini@math.unipd.it} , Riccardo Molinarolo\thanks{Dipartimento di Scienze Molecolari e Nanosistemi, Universit\`a Ca' Foscari Venezia,  Via Torino 155, 30172 Venezia Mestre, Italy. Email: riccardo.molinarolo@unive.it} , Paolo Musolino\thanks{{Dipartimento di Matematica `Tullio Levi-Civita', Universit\`a degli Studi di Padova, Via Trieste 63, 35121 Padova, Italy. Email: paolo.musolino@unipd.it}}\ \thanks{Corresponding author}}

\title{Multi-parameter perturbations for the space-periodic heat equation}

\date{November 29, 2023}


\maketitle

\noindent
{\bf Abstract:} This paper is divided into three parts. The first part focuses on periodic layer heat potentials, demonstrating their smooth dependence on regular perturbations of the support of integration. In the second part, we present an application of the results from the first part. Specifically, we consider a transmission problem for the heat equation in a periodic {domain} and we show that the solution depends smoothly on the shape of the transmission interface, boundary data, and {transmission} parameters. Finally, in the last part of the paper, we fix all parameters except for the {transmission parameters} and outline a strategy to deduce an explicit expansion of the solution using {Neumann}-type series.

\vspace{9pt}

\noindent
{\bf Keywords:}  heat equation, domain perturbation, layer potentials, transmission problem, shape sensitivity analysis, periodic domain,  special nonlinear operators, Neumann series.
\vspace{9pt}

\noindent   
{{\bf 2020 Mathematics Subject Classification:}}  35K20; 31B10; 35B10; 47H30;  45A05.

\section{Introduction}

Understanding how the properties of {an} object depend on its shape is a crucial aspect of many real-world problems, especially when seeking to achieve the optimal configuration for maximizing some sort of efficiency.

In mathematical jargon, the quest for optimal shapes is commonly known as ``shape optimization,''  and it has garnered considerable attention in the mathematical literature.  The interested reader can find ample references and results in the monographs by  {Henrot and Pierre \cite{HePi05}  and Soko\l owski and Zol\'esio \cite{SoZo92}}.

From a mathematical standpoint, addressing such questions often involves studying how solutions to specific boundary value problems, as well as related quantities, are affected by perturbations of the domain of definition and other problem parameters. This leads us to analyze the mappings that connect a set of perturbation parameters to the solution of a boundary value problem. To undertake this {project}, having access to the toolbox of differential calculus is advantageous. Consequently, understanding the regularity properties of these maps becomes crucial. In other words, it is important to determine whether these maps are continuous, differentiable, or enjoy higher regularity properties, such as smoothness and analyticity.

These properties reveal different aspects of the perturbation and can be used in different ways: Continuity implies that small variations of the perturbation parameters correspond to small changes in the solution. Differentiability allows for characterizing the stationary points as critical points. 
These critical points are important in optimization problems as they represent potential optimal configurations. Smoothness and analyticity are stronger properties. With smoothness we can approximate the solution with its Taylor expansion in the perturbation parameter with any degree of accuracy, while with  analyticity we can represent the solution as a convergent power series.

Now, a common {approach} for studying boundary value problems is {the layer potential theoretic method}, which employs integral operators to transform the original problem into a system of boundary integral equations. Eventually, this method allows us to obtain the solution as a sum of layer potentials.

As a result, an approach to understanding the perturbation sensitivity of a solution to a boundary value problem is by studying how the layer potentials and the integral operators depend upon such perturbations.

Many authors have explored this approach for elliptic equations. {For example,  Potthast \cite{Po94} proved that layer potentials for the Helmholtz equation are Fr\'echet differentiable functions of the support of integration.  Applications to scattering theory can be found, e.g., in  Haddar and Kress \cite{HaKr04} and  Kirsch \cite{Ki93}.}

However, we observe that very few results prove regularities beyond differentiability. An exception is the works of Lanza de Cristoforis and his collaborators, dedicated to proving that layer potentials and integral operators depend analytically on domain perturbations. Here we mention {Lanza de Cristoforis and Rossi \cite{LaRo04}}  for the layer potentials for the Laplace equation,  Lanza de Cristoforis and Rossi \cite{LaRo08} for the Helmholtz equation,   \cite{DaLa10} for general second order equations, and \cite{LaMu11} for the periodic case. Moreover, in \cite{DaLu23} we have obtained a smoothness result for the heat layer potentials which,  in the first part of the present paper, we will extend to the space-periodic heat layer potentials.

The method developed by Lanza de Cristoforis and collaborators was called the ``functional analytic approach'' (cf. \cite{DaLaMu21}). It was used for both regular and singular perturbations, where a perturbation is classified as regular if it does not cause any loss of regularity in the domain, and as singular if it does. 

Another approach to dealing with regular domain perturbations has recently appeared in the literature, relying on complex analysis techniques and aiming to prove the ``shape holomorphy'' of layer potential operators and integral operators. For applications of this approach, we refer the reader to {Jerez-Hanckes, Schwab, and Zech \cite{JeScZe17}, which deals with the electromagnetic wave scattering problem}.

Apart from \cite{DaLu23}, all the above cited literature concerns elliptic equations. Notably, corresponding results for parabolic problems are more scarse. To the best of our knowledge the only exceptions are { {some} works of Chapko, Kress and Yoon (see, e.g., \cite{ChKrYo98})} and Hettlich and {Rundell} \cite{HeRu01} for the Fr\'echet differentiability upon the domain of the solution of the heat equation with application to some inverse problems in heat conduction, and the already cited \cite{DaLu23} for the infinite order smoothness of the layer heat potentials upon  the support of integration.

In this paper, we adopt Lanza de Cristoforis' functional analytic approach to obtain higher order regularity results for the space-periodic version of layer heat potentials upon the support of integration. In particular, in the first part of the paper we investigate the space-periodic layer potentials for the heat equation and demonstrate that they depend smoothly on a pair $(\phi,\mu)$, where $\phi$ is a function that characterizes the shape of the domain and $\mu$ is the (pull-back of the) density function. To achieve this, we build upon similar findings for the nonperiodic heat layer potentials established in \cite{DaLu23}. To the best of our knowledge, this is the first paper to show such a result for space-periodic heat layer potentials, previous papers dealing with periodic layer potentials being dedicated to the case of {elliptic operators.}  

In the subsequent sections, we showcase how the results obtained in the first part can be utilized to examine the shape sensitivity of solutions to boundary value problems. As an illustrative application, we consider {a} transmission problem for the heat equation in a space-periodic {domain}. We show that the solution depends smoothly on the shape of the transmission interface, as well as on the boundary data and the {transmission} parameters.

Lastly, in the final part of the paper, we revisit the space-periodic transmission problem studied in the previous section. However, this time, we fix all parameters except for the { transmission parameters}. Then we outline a strategy to deduce an explicit expansion of the solution using a Neumann-type series.

The paper is organized as follows: Section \ref{s:prel} introduces some notation and preliminaries. In Section \ref{s:class}, we review certain results from \cite{DaLu23} concerning nonperiodic layer potentials. In Section \ref{s:period}, we derive analogous results for the space-periodic layer potentials. Section \ref{s:trans} investigates the perturbation sensitivity of solutions to {a} transmission problem in a space-periodic domain. Finally, in Section \ref{s:neu}, we consider the scenario where all parameters are fixed, except for the { transmission parameters}.

\section{Preliminaries}\label{s:prel}

From this point onward, we fix a value for $n$ from the set ${\mathbb{N}}\setminus\{0,1\}$, where ${\mathbb{N}}$ denotes the set of natural numbers, including zero. Additionally, we define a periodicity cell as follows:
\[
Q := \prod_{j=1}^n ]0,q_{jj}[,
\]
where $q_{jj}>0$ for all $j \in \{1,\ldots,n\}$.
We denote by $q$ the diagonal matrix 
\[q := 
 \begin{pmatrix}
  q_{11} & 0 & \cdots & 0 \\
  0 & q_{22} & \cdots & 0 \\
  \vdots  & \vdots  & \ddots & \vdots  \\
  0 & 0 & \cdots & q_{nn} 
 \end{pmatrix},
\]
and by $|Q|_n  = \prod_{i=1}^n q_{jj}$ the measure of the peridicity cell $Q$. Clearly 
\[
q\mathbb{Z}^n= \{qz:z\in \mathbb{Z}^n\}
\]
is the set of vertices of a periodic subdivision of $\mathbb{R}^n$ corresponding to the fundamental cell $Q$.  A set $A\subseteq \mathbb{R}^n$ is said to be $q$-periodic if $A +qz = A$ for all $z \in \mathbb{Z}^n$. If $A$ is a $q$-periodic set, a function $f:A \to \mathbb{R}$ is said to be $q$-periodic if $f(\cdot +qz) = f(\cdot)$ for all $z \in \mathbb{Z}^n$. 

If $\Omega$ is a subset of $\mathbb{R}^n$ then $\overline\Omega$, $\partial\Omega$, and $\nu_\Omega$ denote the closure, boundary, and, where defined, {the} outward normal to $\Omega$, respectively. If $\overline\Omega \subseteq Q$, then we set
\[
\mathbb{S}[\Omega]:=  \bigcup_{z\in \mathbb{Z}^n}(qz +\Omega)=q\mathbb{Z}^n +\Omega, \qquad
 \mathbb{S}[\Omega]^- := \mathbb{R}^n \setminus \overline{\mathbb{S}[\Omega]}.
\]
We observe that both $\mathbb{S}[\Omega]$ and $\mathbb{S}[\Omega]^-$ are $q$-periodic domains.

We will consider the heat equation
\[
\partial_t u - \Delta u = 0
\]
in domains that are space-periodic and our approach will rely on the space-periodic potential theory for the heat equation. Specifically, we will exploit space-periodic layer potentials obtained by replacing the classical fundamental solution of the heat equation with a periodic counterpart. As it is well known, a fundamental solution of the heat equation is defined as follows:
\[
S_{n}(t,x):=
\left\{
\begin{array}{ll}
 \frac{1}{(4\pi t)^{\frac{n}{2}} }e^{-\frac{|x|^{2}}{4t}}&{\mathrm{if}}\ (t,x)\in (0,+\infty)\times{\mathbb{R}}^{n}\,, 
 \\
 0 &{\mathrm{if}}\ (t,x)\in ((-\infty,0]\times{\mathbb{R}}^{n})\setminus\{(0,0)\}.
\end{array}
\right.
\]
Then a $q$-periodic fundamental solution $S_{q,n}:(\mathbb{R}\times {\mathbb{R}}^{n})\setminus (\{0\} \times q \mathbb{Z}^n)\to\mathbb{R}$ for the heat equation is defined by taking 
\begin{equation} \label{phinq}
S_{q,n}(t,x):=
\left\{
\begin{array}{ll} 
\sum_{z\in \mathbb{Z}^n}\frac{1}{(4\pi t)^{\frac{n}{2}} }e^{-\frac{|x+qz|^{2}}{4t}}&{\mathrm{if}}\ (t,x)\in  (0,+\infty)\times{\mathbb{R}}^{n}\,, 
 \\
 0 &{\mathrm{if}}\ (t,x)\in \left((-\infty,0]\times{\mathbb{R}}^{n}\right)\setminus (\{0\} \times q \mathbb{Z}^n)
\end{array}
\right.
\end{equation}
(see  Pinsky \cite[Ch. 4.2]{Pi02} for the case $n=1$ and Bernstein, Ebert and S\"oren Krau{\ss}har \cite{BeEbSo11} for $n \geq 2$, see also \cite{Lu18}).

We will use the functional framework of Schauder classes. For the classical definitions of sets and functions belonging to class  $C^{j,\alpha}$, with $\alpha\in(0,1)$ and $j\in\{0,1\}$, we refer to Gilbarg and Trudinger \cite{GiTr83}.  For the definition of 
time-dependent functions  in the parabolic Schauder class $C^{\frac{j+\alpha}{2};j+\alpha}$ on $[0,T]\times \overline\Omega$ or $[0,T]\times \partial\Omega$  we refer to Lady\v{z}enskaja,  Solonnikov, and Ural'ceva \cite{LaSoUr68}.  In essence, a function of class $C^{\frac{j+\alpha}{2};j+\alpha}$ is 
$\left(\frac{j+\alpha}{2}\right)$-H\"older continuous in the time variable, and $(j,\alpha)$-Schauder regular in the space variable. We also denote by  $C_0^{\frac{j+\alpha}{2};j+\alpha}$ the parabolic Schauder class of functions that vanish at time $t=0$, and by $C_{0,q}^{\frac{j+\alpha}{2};j+\alpha}$ the subspace of $C_0^{\frac{j+\alpha}{2};j+\alpha}$ consisting of functions that are also $q$-periodic. The definition of parabolic Schauder classes can be extended to products of intervals and manifolds by using local charts. In the present paper {all the functional spaces we consider  { consist} of} real valued functions.

We will adopt the following notation: If $D$ is a subset of $\mathbb{R}^n$,
$T >0$ and $h$ is a map from $D$ to $\mathbb{R}^n$, we denote by $h^T$ the map from  $[0,T] \times D$
 to  $[0,T] \times \mathbb{R}^n$ defined by 
\[
h^T(t,x) := (t, h(x)) \qquad \forall (t,x) \in  [0,T] \times D.
\]
Let $\alpha\in(0,1)$ and assume that 
\begin{equation}\label{cond Omega}
\begin{split}
&\Omega\text{ is a bounded connected open subset of } \mathbb{R}^n\  \text{of class } C^{1,\alpha}\\
& \text {and has connected exterior } \Omega^-:=\mathbb{R}^n\setminus\overline{\Omega}\,. 
\end{split}   
\end{equation}
We take $\Omega$ to be the reference shape, and to formalize domain perturbations, we consider specific classes of diffeomorphisms defined on the boundary $\partial \Omega$.

Precisely, we  denote by $\mathcal{A}^{1,\alpha}_{\partial \Omega}$ the set of functions of class $C^{1,\alpha}(\partial\Omega, \mathbb{R}^{n})$ that are injective together with their differential at all points  of $\partial\Omega$.  According to Lanza de Cristoforis and Rossi \cite[Lem. 2.2, p. 197]{LaRo08} and \cite[Lem. 2.5, p. 143]{LaRo04}, $\mathcal{A}^{1,\alpha}_{\partial \Omega}$ is an open subset of $C^{1,\alpha}(\partial\Omega, \mathbb{R}^{n})$.

For $\phi \in \mathcal{A}^{1,\alpha}_{\partial \Omega}$, the Jordan-Leray separation theorem ensures that $\mathbb{R}^{n}\setminus \phi(\partial \Omega)$ has exactly two open connected components {(see, e.g.,  \cite[\S A.4]{DaLaMu21}).} We denote the bounded connected component of $\mathbb{R}^{n}\setminus \phi(\partial \Omega)$ by $\mathbb{I}[\phi]$ and the unbounded one by $\mathbb{E}[\phi]$. Moreover, we will use $\nu_\phi$ to denote the outer unit normal to $\mathbb{I}[\phi]$.
 
Then we set 
\[
{\mathcal{A}}_{\partial\Omega,Q}^{1,\alpha} := \left\{\phi \in\mathcal{A}_{\partial \Omega}^{1,\alpha} : \phi(\partial\Omega) \subseteq Q\right\}, 
\]
and for brevity, we use the notation
\[
\mathbb{S}[\phi]:= \mathbb{S}[\mathbb{I}[\phi]], \qquad \mathbb{S}[\phi]^-:= \mathbb{S}[\mathbb{I}[\phi]]^-
\]
for all $\phi \in {\mathcal{A}}_{\partial\Omega,Q}^{1,\alpha}$.
Both $\mathbb{S}[\phi]$ and $\mathbb{S}[\phi]^-$ are $q$-periodic domains depending on the diffeomorphism $\phi$ (see Figure \ref{fig:lphi}). Therefore,  we can perturb the shape of $\mathbb{S}[\phi]$ and $\mathbb{S}[\phi]^-$ by changing the function $\phi$.

\begin{figure}[!htb]
\centering
\includegraphics[width=4.2in]{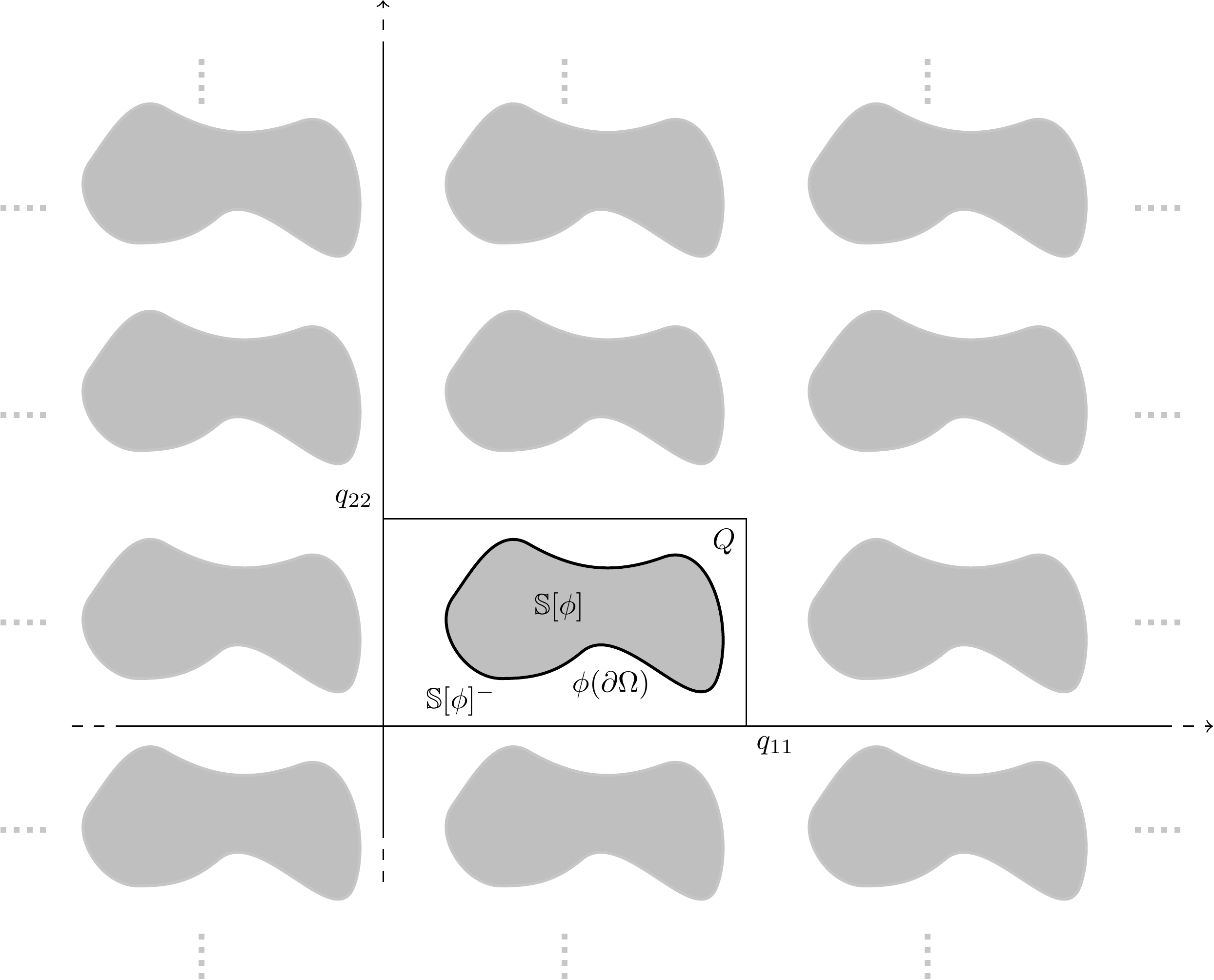}
\caption{{\it The sets $\mathbb{S}[\phi]^-$,  
$\mathbb{S}[\phi]$, and  $\phi(\partial\Omega)$ in case $n=2$.}}\label{fig:lphi} 
\end{figure}

We will consider integral operators supported on $\phi(\partial \Omega)$. To analyze their dependence on $\phi$, we will perform a change of variables. For this purpose, we rely on the following technical lemma, which shows that the map related to the change of variables in the area element and the pullback $\nu_\phi\circ\phi$ of the outer normal field depend analytically on $\phi$. A proof of this lemma can be found in Lanza de Cristoforis and Rossi \cite[p.~166]{LaRo04} and Lanza de Cristoforis \cite[Prop. 1]{La07}.
\begin{lemma}\label{rajacon}
Let $\alpha\in \mathopen (0,1)$ and  $\Omega$ be a bounded open subset of  $\mathbb{R}^n$ of class $C^{1,\alpha}$ with connected exterior.   Then the following statements hold.
\begin{itemize}
\item[(i)] For each $\phi \in \mathcal{A}^{1,\alpha}_{\partial \Omega}$, there exists a unique  
$\tilde \sigma_n[\phi] \in C^{1,\alpha}(\partial\Omega)$ such that $\tilde \sigma_n[\phi] > 0$ and 
\[ 
  \int_{\phi(\partial\Omega)}w(s)\,d\sigma_s=  \int_{\partial\Omega}w \circ \phi(y)\tilde\sigma_n[\phi](y)\,d\sigma_y, \qquad \forall w \in L^1(\phi(\partial\Omega)).
\]
Moreover, the map $\tilde \sigma_n[\cdot]$  is real analytic from $\mathcal{A}_{\partial \Omega}^{1,\alpha}  $ to $ C^{0,\alpha}(\partial\Omega)$.
\item[(ii)] The map from $\mathcal{A}_{\partial \Omega}^{1,\alpha} $ to $ C^{0,\alpha}(\partial\Omega, \mathbb{R}^{n})$ which takes $\phi$ to $\nu_{\phi} \circ \phi$ is real analytic.
\end{itemize}
\end{lemma}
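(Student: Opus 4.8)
The plan is to reduce both quantities to the first tangential derivatives of $\phi$ and then combine a few elementary analyticity facts. I would work in a fixed finite atlas $\{(U_r,\gamma_r)\}$ of $\partial\Omega$, with $C^{1,\alpha}$ local parametrizations $\gamma_r\colon U_r\to\partial\Omega$, $U_r\subseteq\mathbb{R}^{n-1}$ open, together with a subordinate partition of unity $\{\theta_r\}$. This is exactly the data defining the Schauder norms on $\partial\Omega$, so it suffices to check analyticity chart by chart and patch, the target norms on $C^{0,\alpha}(\partial\Omega)$ and $C^{0,\alpha}(\partial\Omega,\mathbb{R}^n)$ being equivalent to expressions assembled from the $\theta_r$ and the fixed chart maps.

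In a chart, $\phi\circ\gamma_r$ parametrizes the perturbed surface $\phi(\partial\Omega)$, with tangent vectors $\partial_{u_i}(\phi\circ\gamma_r)$, $i=1,\ldots,n-1$. The classical surface change-of-variables formula gives $d\sigma=\sqrt{\det G_r[\phi]}\,du$ on $\phi(\partial\Omega)$, where $G_r[\phi]=(\langle\partial_{u_i}(\phi\circ\gamma_r),\partial_{u_j}(\phi\circ\gamma_r)\rangle)_{i,j}$ is the Gram matrix; comparing with the reference element built from $\gamma_r$ alone yields a local formula for $\tilde\sigma_n[\phi]$. Likewise the unnormalized normal at $\phi(\gamma_r(u))$ is the generalized cross product of the $\partial_{u_i}(\phi\circ\gamma_r)$, whose Euclidean norm is precisely $\sqrt{\det G_r[\phi]}$ by the Gram identity, so $\nu_\phi\circ\phi$ is this vector divided by its norm, with a sign fixed by the reference orientation.

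The analyticity then rests on three standard ingredients. First, $\phi\mapsto\partial_{u_i}(\phi\circ\gamma_r)$ is linear and continuous from $C^{1,\alpha}(\partial\Omega,\mathbb{R}^n)$ to $C^{0,\alpha}(U_r,\mathbb{R}^n)$, hence real analytic. Second, $C^{0,\alpha}$ is a Banach algebra under pointwise multiplication, so the entries of $G_r[\phi]$, the determinant $\det G_r[\phi]$, and the components of the cross product, being fixed polynomials in the tangent components, are real-analytic functions of $\phi$. Third, since every $\phi\in\mathcal{A}^{1,\alpha}_{\partial\Omega}$ has injective tangential differential and $\partial\Omega$ is compact, $\det G_r[\phi]$ is bounded below by a positive constant, so $\phi\mapsto\det G_r[\phi]$ maps into the open subset of $C^{0,\alpha}(U_r)$ of functions with positive infimum. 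On that open set the superposition operators induced by the real-analytic maps $t\mapsto\sqrt{t}$ and $t\mapsto t^{-1/2}$ are real analytic. Composing analytic maps, $\phi\mapsto\sqrt{\det G_r[\phi]}$ and $\phi\mapsto\nu_\phi\circ\phi$ are real analytic in each chart; patching gives the global statements, while positivity of $\tilde\sigma_n[\phi]$ and existence and uniqueness follow immediately from the change-of-variables formula.

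The main obstacle is the third ingredient: making precise and proving the real analyticity of the superposition operators $\sqrt{\cdot}$ and $t^{-1/2}$ on the open cone of positive-infimum Hölder functions, which I would derive from the power-series expansions of these functions together with the Banach-algebra estimates, and verifying the uniform lower bound on $\det G_r[\phi]$, which leans on the openness of $\mathcal{A}^{1,\alpha}_{\partial\Omega}$ and the injectivity of the differential. Everything else is routine once the chart framework is in place; the only remaining care is the orientation-compatible choice of sign for the cross-product normal, which is locally constant and therefore harmless for analyticity.
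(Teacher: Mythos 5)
The paper offers no proof of this lemma: it is quoted verbatim from Lanza de Cristoforis--Rossi \cite[p.~166]{LaRo04} and Lanza de Cristoforis \cite[Prop.~1]{La07}, and your chart-based argument (tangential differentials, Gram determinant and generalized cross product, the Banach-algebra structure of $C^{0,\alpha}$, and analyticity of the superposition operators $t\mapsto t^{\pm 1/2}$ on the positive cone) is essentially the proof given in those references. Your plan is correct and complete in outline; the only cosmetic remark is that the $C^{1,\alpha}$ membership asserted for $\tilde\sigma_n[\phi]$ in the statement is not what your construction (or the cited source) yields --- $C^{0,\alpha}$ is the natural class and is all that the analyticity claim requires.
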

%

\section{Domain perturbations of classical layer potentials}\label{s:class}

Our first goal is to demonstrate that space-periodic layer potentials for the heat equation depend smoothly on perturbations of the support of integration. As previously mentioned in the introduction, {related} results have already been established in \cite{DaLu23} for the non-periodic layer potentials. We intend to leverage those existing results and extend them to the periodic case.

Therefore, we begin by reviewing the findings of \cite{DaLu23}, which concern layer heat potentials supported on $[0,T] \times \phi(\partial\Omega)$ for some $T>0$ and $\phi\in \mathcal{A}_{\partial \Omega}^{1,\alpha}$, as well as integral operators acting between Schauder spaces on $[0,T] \times \phi(\partial\Omega)$. However, to treat $\phi$ as a variable and state smoothness results for $\phi$-dependent functions, we need to work in a $\phi$-independent functional setting. We will then pullback the layer potentials to the fixed domain $[0,T]\times \partial \Omega$ and, simultaneously, push forward the density functions from  $[0,T]\times \partial \Omega$ to $[0,T]\times \phi(\partial\Omega)$.

To be precise, we consider the operators that take $\mu \in C_0^{\frac{\alpha}{2};\alpha}([0,T]\times \partial\Omega)$ to 
\begin{align*} 
  &V[\phi,\mu](t,\xi) := \int_0^t\int_{\phi(\partial\Omega)}  S_n(t-\tau,\phi(\xi)-y) \mu \circ (\phi^T)^{(-1)} (\tau,y) \,d\sigma_yd\tau,\\  
& V_l[\phi,\mu](t,\xi) := \int_0^t\int_{\phi(\partial\Omega)} \partial_{x_l}S_n(t-\tau,\phi(\xi)-y) \mu \circ (\phi^T)^{(-1)} (\tau,y) \,d\sigma_yd\tau \quad \forall l \in\{1,\ldots,n\},\\
&W^*[\phi,\mu](t,\xi) :=\int_0^t\int_{\phi(\partial\Omega)}D_xS_n(t-\tau,\phi(\xi)-y)\cdot \nu_{\phi}(\xi) \mu \circ (\phi^T)^{(-1)} (\tau,y) \,d\sigma_yd\tau,
\end{align*}
for all $(t,\xi) \in [0,T] \times \partial\Omega$. Additionally, for  $\psi \in C_0^{\frac{1+\alpha}{2};1+\alpha}([0,T]\times \partial\Omega)$ we define
\begin{align*} 
   &W[\phi,\psi](t,\xi) := -\int_0^t\int_{\phi(\partial\Omega)}D_xS_n(t-\tau,\phi(\xi)-y)\cdot \nu_{\phi}(y) \psi \circ (\phi^T)^{(-1)} (\tau,y) \,d\sigma_yd\tau,
\end{align*}
for all $(t,\xi) \in [0,T] \times \partial\Omega$. In the expressions above, $\partial_{x_l}S_n$ and $D_xS_n$ denote the $x_l$-derivative  and the gradient 
of $S_n$ with respect to the spatial variables, respectively. 

{The functions $V[\phi,\mu]$, $V_l[\phi,\mu]$, and $W^*[\phi,\mu]$ are the $\phi$-pullbacks of the single-layer potential and of  integral operators associated to its $x_l$-derivative and to its normal derivative. Instead $W[\phi,\psi]$  is the $\phi$-pullback of the double-layer potential.} They are defined {on $[0,T] \times \partial\Omega$} and have densities given by $\mu \circ (\phi^T)^{(-1)}$ and $\psi \circ (\phi^T)^{(-1)}$.
  
In \cite[Thm. 6.3]{DaLu23}, it has been proven that the operators $V[\phi,\cdot]$, $V_l[\phi,\cdot]$, $W^*[\phi,\cdot]$, and $W[\phi,\cdot]$ depend smoothly on the shape parameter $\phi$. Specifically, we have the following result:

\begin{theorem}\label{thm:clp}
Let $\alpha\in(0,1)$ and $T>0$. Let $\Omega$ be as in \eqref{cond Omega}.  Then, the maps that take $\phi \in \mathcal{A}^{1,\alpha}_{\partial\Omega}$ to the following operators are all of class $C^{\infty}$:
\begin{itemize}
\item[(i)]  $V[\phi,\cdot] \in \mathcal{L} \left( C^{\frac{\alpha}{2};\alpha}_0([0,T]\times\partial\Omega), C^{\frac{1+\alpha}{2};1+\alpha}_0([0,T]\times\partial\Omega) \right)$,
\item[(ii)]  $V_l[\phi,\cdot] \in \mathcal{L} \left(C^{\frac{\alpha}{2};\alpha}_0([0,T]\times\partial\Omega), C^{\frac{\alpha}{2};\alpha}_0([0,T]\times\partial\Omega) \right)$ for all $l \in \{1,\ldots,n\}$,
\item[(iii)]  $W^*[\phi,\cdot] \in \mathcal{L} \left( C^{\frac{\alpha}{2};\alpha}_0([0,T]\times\partial\Omega), C^{\frac{\alpha}{2};\alpha}_0([0,T]\times\partial\Omega)\right)$,
\item[(iv)]   $W[\phi,\cdot] \in \mathcal{L} \left( C^{\frac{1+\alpha}{2};1+\alpha}_0([0,T]\times\partial\Omega),  C^{\frac{1+\alpha}{2};1+\alpha}_0([0,T]\times\partial\Omega) \right)$\,.
\end{itemize}
\end{theorem}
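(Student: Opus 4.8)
Since the statement is precisely \cite[Thm.~6.3]{DaLu23}, one could simply cite it; I will instead sketch the strategy underlying that result, which is an instance of the functional analytic approach. The plan is to first transport each of the four $\phi$-dependent operators onto the \emph{fixed} space-time cylinder $[0,T]\times\partial\Omega$ by the change of variables $y=\phi(\eta)$, and then to reduce the smoothness in $\phi$ to two ingredients: the analyticity of the geometric factors already granted by Lemma \ref{rajacon}, and the smoothness of a single ``model'' map carrying the heat-kernel singularity.

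Carrying out the change of variables through Lemma \ref{rajacon}(i), the single-layer operator takes the form
\[
V[\phi,\mu](t,\xi)=\int_0^t\!\!\int_{\partial\Omega}S_n\bigl(t-\tau,\phi(\xi)-\phi(\eta)\bigr)\,\mu(\tau,\eta)\,\tilde\sigma_n[\phi](\eta)\,d\sigma_\eta\,d\tau,
\]
and $V_l$, $W^*$, $W$ acquire analogous expressions in which $\partial_{x_l}S_n$ or $D_xS_n$ replaces $S_n$, and in which $W^*$, $W$ additionally carry the factor $\nu_\phi\circ\phi$ evaluated at $\xi$ or $\eta$. Since Lemma \ref{rajacon} guarantees that $\phi\mapsto\tilde\sigma_n[\phi]$ and $\phi\mapsto\nu_\phi\circ\phi$ are real analytic into the appropriate Schauder spaces, and since multiplication by, and composition with, such smooth factors preserves $C^\infty$ dependence, the whole problem collapses to showing that $\phi$ is mapped smoothly to the integral operator whose kernel is the \emph{pulled-back heat kernel} $S_n\bigl(t-\tau,\phi(\xi)-\phi(\eta)\bigr)$, together with its spatial derivatives for $V_l$, $W^*$, $W$.

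To treat that core map I would exploit the explicit Gaussian structure of $S_n$. Working in local parametrizations of $\partial\Omega$, one writes $\phi(\xi)-\phi(\eta)$ as a nondegenerate linear image of $\xi-\eta$ with coefficients depending analytically on $\phi$, so that $|\phi(\xi)-\phi(\eta)|^2$ is, up to an analytic and uniformly positive factor, comparable to $|\xi-\eta|^2$. Inserting this into the Gaussian exhibits the pulled-back kernel as an analytic function of the $\phi$-dependent coefficients whose only diagonal singularity has exactly the parabolic homogeneity of the flat heat kernel, so that it can be compared with a fixed singular model up to smoother remainders. The conclusion then follows by combining (a) the mapping properties of the model parabolic potential operators between the parabolic Schauder classes, which constitute the technical core of the earlier sections of \cite{DaLu23} and account for the regularity gains (for instance $\tfrac{\alpha}{2};\alpha\to\tfrac{1+\alpha}{2};1+\alpha$ for $V$, and boundedness on $\tfrac{\alpha}{2};\alpha$ for the stronger-singular $V_l$ and $W^*$), with (b) abstract results on the smoothness of nonlinear composition and superposition operators on Schauder spaces, as in \cite{LaRo04, La07}.

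The main obstacle is precisely ingredient (a): controlling the singular parabolic integral operators and all of their $\phi$-differentials simultaneously in the parabolic Schauder norm. The difficulty stems from the anisotropic scaling of the heat kernel, where time scales like the square of space, so that elliptic estimates must be replaced by parabolic ones weighted in $t-\tau$; one has to check that differentiating the kernel in $\phi$ produces new kernels whose diagonal singularity is no worse than what the Schauder mapping estimates can absorb. Once the first differential is identified as a bounded operator of the same structural type, a bootstrap on the order of the differential closes the argument and yields the asserted $C^\infty$ dependence.
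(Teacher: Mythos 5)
The paper itself gives no proof of this theorem: it is imported verbatim from \cite[Thm.~6.3]{DaLu23}, so your opening move of simply citing that result is exactly what the authors do. One caveat on your appended sketch: the strategy you outline (local parametrizations, comparing $|\phi(\xi)-\phi(\eta)|^2$ with $|\xi-\eta|^2$ and invoking the composition-operator machinery of \cite{LaRo04, La07}) is precisely the adaptation of the elliptic argument that the paper explicitly warns ``only yields a regularity result for $C^2$ perturbations of the domain, falling short of the desired $C^{1,\alpha}$ setting,'' so it should not be presented as a substitute for the genuinely more delicate argument of \cite{DaLu23}.
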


Theorem \ref{thm:clp} presents an extension of similar results that were already known for layer potentials associated with elliptic equations to the parabolic setting. For example, Lanza de Cristoforis and Rossi \cite{LaRo04, LaRo08} established these results for the Laplace and Helmholtz equations, and \cite{DaLa10} for general second-order equations. However, extending these results to the parabolic setting is not a trivial task. The main difficulty lies in the interaction between the time and space variables. Applying the strategy used in \cite{LaRo04} to the parabolic case only yields a regularity result for $C^2$ perturbations of the domain, falling short of the desired $C^{1,\alpha}$ setting. 

Another difference between the elliptic and parabolic cases is that in the elliptic scenario, the layer potentials exhibit analytic dependence on the shape parameter $\phi$, while Theorem \ref{thm:clp} only guarantees that they are infinitely differentiable maps. The reason for this lack of analyticity lies in the regularity of the fundamental solution $S_n$, which is $C^\infty$ but not real analytic over the entire space $\mathbb{R}^{1+n}\setminus\{(0,0)\}$ due to its non-analytic behavior at $t=0$. In contrast, the fundamental solution of the Laplace equation, as well as other constant coefficient elliptic operators, is analytic in $\mathbb{R}^n\setminus\{0\}$.

As we shall see, such a difference implies a distinct behavior of the solutions to boundary value problems: analytic dependence on $\phi$ for the elliptic case {\it vs} {$C^\infty$-}dependence for the parabolic case.

\section{Space-periodic layer heat potentials}\label{s:period}
We now shift our focus to space-periodic layer heat potentials, where we replace the classical fundamental solution $S_n$ of the heat equation with its periodization $S_{q,n}$ (see \eqref{phinq}). We will start by introducing the definition of periodic layer potentials. Next, we will review some properties established in \cite{Lu18}. Finally, we will utilize Theorem \ref{thm:clp} to derive the corresponding regularity results for the $\phi$-pullback of periodic layer potentials.
 
Let $\alpha\in(0,1)$ and $T>0$. Let $\Omega$ be a bounded open subset of $\mathbb{R}^n$ of class $C^{1,\alpha}$ such that $\overline{\Omega} \subseteq Q$. 
For a density $\mu \in L^\infty\big([0,T] \times \partial\Omega\big)$, the $q$-periodic in space layer heat potentials are defined as
\begin{align*} 
v_q [\mu](t,x) 
 := \int_{0}^{t} \int_{\partial \Omega} S_{q,n}(t-\tau,x-y) \mu(\tau, y)\,d\sigma_y d\tau \qquad \forall\,(t,x) \in [0, T] \times \mathbb{R}^n,
\end{align*}
and 
\begin{equation*} 
w_q[\mu](t,x) := -\int_{0}^t\int_{\partial\Omega}  D_x
S_{q,n}(t-\tau,x-y)\cdot \nu_\Omega(y) \mu(\tau,y)\,d\sigma_yd\tau \qquad \forall\,(t,x) \in [0,T]\times \mathbb{R}^n.
\end{equation*}
The functions $v_q[\mu]$ and $w_q[\mu]$ are called respectively the $q$-periodic single- and double-layer heat potential.
 Moreover, we set
\begin{align*}
w^*_{q}[\mu](t,x) := \int_{0}^t\int_{\partial\Omega} D_x
S_{q,n}(t-\tau,x-y)\cdot  \nu_\Omega(x)\mu(\tau,y)\,d\sigma_yd\tau \qquad \forall\,(t,x) \in [0,T] \times \partial\Omega.
\end{align*}
The map $w^*_{q}[\mu]$ is related to the normal
derivative of the $q$-periodic in space single-layer potential (see Theorem \ref{thmsl}).

Periodic layer heat potentials enjoy properties similar to that of their standard counterpart. We collect them in the following two theorems. The proofs  can be  found in   \cite[Thms. 2, 3]{Lu18}.
\begin{theorem}\label{thmsl}
Let $\alpha\in(0,1)$ and $T>0$. Let $\Omega$ be a bounded open subset of $\mathbb{R}^n$ of class $C^{1,\alpha}$ such that $\overline{\Omega} \subseteq Q$. Then the following statements hold.
\begin{itemize}

\item[(i)] Let $\mu \in L^\infty([0,T] \times \partial\Omega)$. Then $v_q[\mu]$  is continuous, $q$-periodic in space and 
$v_q[\mu] \in C^\infty\big({(0,T]} \times (\mathbb{R}^n \setminus \partial\mathbb{S}[\Omega])\big)$. 
Moreover $v_q[\mu]$ solves the heat equation 
in $(0,T]\times (\mathbb{R}^n \setminus \partial\mathbb{S}[\Omega])$.

\item[(ii)] Let $v_q^+[\mu]$ and $v_q^-[\mu]$ denote 
the restrictions of $v_q[\mu]$ to $[0,T] \times \overline{\mathbb{S}[\Omega]}$ and to $[0,T]\times \overline{\mathbb{S}[\Omega]^-}$, respectively. The map from  $C_0^{\frac{\alpha}{2};  \alpha}([0,T] \times \partial\Omega)$ to  $C_{0,q}^{\frac{1+\alpha}{2}; 1+\alpha}\big([0,T] \times \overline{\mathbb{S}[\Omega]}\big)$ that takes $\mu$ to $v_q^+[\mu]$ is linear and continuous. Likewise, the map from  $C_0^{\frac{\alpha}{2};  \alpha}([0,T] \times \partial\Omega)$ to  $C_{0,q}^{\frac{1+\alpha}{2}; 1+\alpha}\big([0,T] \times \overline{\mathbb{S}[\Omega]^-}\big)$ that {takes} $\mu$ with $v_q^-[\mu]$ is also linear and continuous.

\item[(iii)] Let $\mu \in C_0^{\frac{\alpha}{2};  \alpha}([0,T] \times \partial\Omega)$ and $l \in \{1,\ldots,n\}$. Then the following jump relations hold:
\begin{align*} 
\frac{\partial}{\partial \nu_\Omega}v_q^\pm[\mu](t,x)  = &\pm \frac{1}{2}\mu(t,x)
 +w_{q}^*[\mu](t,x),\\ \nonumber
\partial_{x_l}v_q^\pm[\mu](t,x) = &\pm \frac{1}{2}\mu(t,x)\left(\nu_{\Omega}(x)\right)_l + \int_{0}^t \int_{\partial\Omega} 
\partial_{x_l} S_{q,n}(t-\tau,x-y)\mu(\tau,y)\, d\sigma_yd\tau, \nonumber
\end{align*}
for all $(t,x) \in [0,T] \times \partial\Omega$.

\end{itemize}
\end{theorem}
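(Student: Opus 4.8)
The plan is to reduce every assertion to the corresponding property of the \emph{classical} (non-periodic) single-layer heat potential, exploiting that $S_{q,n}$ is built from $S_n$ by periodization. Writing
\[
S_{q,n}=S_n+R_{q,n},\qquad R_{q,n}:=\sum_{z\in\mathbb{Z}^n\setminus\{0\}}S_n(\cdot,\cdot+qz),
\]
I would first prove that $R_{q,n}$ is of class $C^\infty$ on $\mathbb{R}\times N$ for some open neighbourhood $N$ of the compact set $\overline{\Omega}-\overline{\Omega}$, the series converging together with all its derivatives uniformly on $[0,T]\times N$. The key geometric fact is that, since $\overline{\Omega}\subseteq Q$, the set $\overline{\Omega}-\overline{\Omega}$ is a compact subset of $\prod_{j=1}^n\,]-q_{jj},q_{jj}[$ and hence has strictly positive distance from $q\mathbb{Z}^n\setminus\{0\}$; thus for $z\neq0$ the spatial argument $w+qz$ stays bounded away from the origin on $N$, so each term $S_n(s,w+qz)$ is smooth across $s=0$ (where it vanishes to infinite order) and the Gaussian decay in $z$, uniform for $s\in[0,T]$, yields convergence of the series and of all its term-by-term derivatives.

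Granting this, for $x$ in a neighbourhood of a single periodicity cell the single-layer potential splits as $v_q[\mu]=v[\mu]+v_R[\mu]$, where $v[\mu]$ is the classical single-layer potential with density $\mu$ on $[0,T]\times\partial\Omega$ and $v_R[\mu]$ is the integral operator with the smooth kernel $R_{q,n}$. Since $R_{q,n}$ is smooth, $v_R[\mu]$ is regular up to $\partial\Omega$ and carries no jump across it; thus all the delicate boundary behaviour is inherited from the classical term $v[\mu]$, whose mapping properties and jump relations are the ones established in \cite{LaSoUr68} and \cite{DaLu23} (and whose boundary trace coincides with the operator recalled in Section~\ref{s:class}).

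For (i): the $q$-periodicity in $x$ of $v_q[\mu]$ follows at once from $S_{q,n}(t,x+qz)=S_{q,n}(t,x)$. Away from $\partial\mathbb{S}[\Omega]$ the kernel is smooth and caloric, so differentiation under the integral sign gives $v_q[\mu]\in C^\infty$ solving the heat equation on $(0,T]\times(\mathbb{R}^n\setminus\partial\mathbb{S}[\Omega])$, while global continuity (including across the layer) follows from the continuity of $v[\mu]$ plus the smoothness of $v_R[\mu]$. For (ii): the classical single-layer operator maps $C_0^{\frac{\alpha}{2};\alpha}([0,T]\times\partial\Omega)$ continuously into $C_0^{\frac{1+\alpha}{2};1+\alpha}$ on each side of $\partial\Omega$, and $v_R[\cdot]$ maps into an even more regular space; restricting to $\overline{\mathbb{S}[\Omega]}$ and $\overline{\mathbb{S}[\Omega]^-}$ and invoking the periodicity from (i) places the images in the periodic subspaces $C_{0,q}^{\frac{1+\alpha}{2};1+\alpha}$, after checking that they vanish at $t=0$. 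For (iii): the classical jump relations give $\partial v^\pm/\partial\nu_\Omega=\pm\frac12\mu+w^*[\mu]$ and the analogous formula for $\partial_{x_l}v^\pm$; adding the jump-free smooth contribution of $v_R[\mu]$ turns the classical principal-value operators into their periodic counterparts $w^*_q$ and the periodic spatial-derivative operator, while the $\pm\frac12\mu$ terms are unchanged because the remainder produces no jump.

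The main obstacle is the first step: proving the $C^\infty$ convergence of $R_{q,n}$ uniformly up to $t=0$, that is, estimating $\sum_{z\neq0}$ of the $(s,w)$-derivatives of $S_n(s,w+qz)$ with constants independent of $s\in[0,T]$ and controlling the limit $s\to0^+$. This is precisely where the parabolic time-space interaction must be handled with care, and it is the hinge that lets one transfer each classical property to the space-periodic setting.
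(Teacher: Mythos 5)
Your proposal follows essentially the same route as the paper: the paper proves Theorem \ref{thmsl} by citing \cite[Thms.~2, 3]{Lu18} and explicitly describes the underlying idea as the decomposition $S_{q,n}=S_n+R_{q,n}$, with $R_{q,n}$ extending to a $C^\infty$ function off $\{0\}\times q(\mathbb{Z}^n\setminus\{0\})$ (the content of \cite[Thm.~1]{Lu18}, which is exactly the ``main obstacle'' you isolate), so that all mapping properties and jump relations are inherited from the classical single-layer potential plus a nonsingular, jump-free remainder. Your argument is correct and matches this strategy step for step.
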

\begin{theorem} \label{thmdl}
Let $\alpha\in(0,1)$ and $T>0$. Let $\Omega$ be a bounded open subset of $\mathbb{R}^n$ of class $C^{1,\alpha}$ such that $\overline{\Omega} \subseteq Q$. Then the following statements hold.
\begin{itemize}
\item[(i)] Let $\mu \in L^\infty([0,T] \times \Omega)$. Then $w_q[\mu]$  is $q$-periodic in space, 
$w_q[\mu] \in C^\infty\big({(0,T]} \times (\mathbb{R}^n \setminus \partial\mathbb{S}[\Omega])\big)$, and $w_q[\mu]$ solves the heat equation in 
$(0,T] \times(\mathbb{R}^n \setminus \partial\mathbb{S}[\Omega])$.

\item[(ii)] Let $\mu \in C_0^{\frac{1+\alpha}{2};  1+\alpha}([0,T] \times \partial\Omega)$.  Then the restriction $w_q[\mu]_{|[0,T] \times\mathbb{S}[\Omega]}$ can be 
extended uniquely to an element $w_q^+[\mu] \in C_{0,q}^{\frac{1+\alpha}{2};1+\alpha}\big([0,T]\times \overline{\mathbb{S}[\Omega]}\big)$ and 
the restriction $w_q[\mu]_{|[0,T] \times \mathbb{S}[\Omega]^-}$ can be 
extended uniquely to an element $w_q^-[\mu] \in C_{0,q}^{\frac{1+\alpha}{2};1+\alpha}\big([0,T] \times\overline{\mathbb{S}[\Omega]^-}\big)$.  
Moreover the following jump formulas hold:
\begin{align*} 
&w_q^\pm[\mu](t,x) = \mp \frac{1}{2} \mu(t,x) + w_q[\mu](t,x)\,,
\\  
&\frac{\partial}{\partial \nu_\Omega}w_q^+[\mu](t,x) - \frac{\partial}{\partial \nu_\Omega}w_q^-[\mu](t,x) = 0,
\end{align*}
for all $(t,x) \in [0,T] \times \partial\Omega$.

\item[(iii)] The map from $C^{\frac{1+\alpha}{2};1+\alpha}_0([0,T] \times \partial\Omega)$ to $C_{0,q}^{\frac{1+\alpha}{2};1+\alpha}\big([0,T]\times \overline{\mathbb{S}[\Omega]}\big)$ 
that takes $\mu$ to the function $w_q^+[\mu]$ is linear and continuous. Likewise, the map from $C^{\frac{1+\alpha}{2};1+\alpha}_0([0,T] \times \partial\Omega)$ to $C_{0,q}^{\frac{1+\alpha}{2};1+\alpha}\big([0,T]\times \overline{\mathbb{S}[\Omega]^-}\big)$ 
{that} takes $\mu$ to the function $w_q^-[\mu]$ is also linear and continuous. 
\end{itemize}
\end{theorem}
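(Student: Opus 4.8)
The plan is to reduce every assertion to the already available non-periodic theory recalled in Section \ref{s:class} by isolating the singularity of the periodic fundamental solution. I would start from the decomposition
\[
S_{q,n}(t,x) = S_n(t,x) + R_{q,n}(t,x), \qquad R_{q,n}(t,x) := \sum_{z \in \mathbb{Z}^n \setminus \{0\}} S_n(t, x+qz).
\]
The first step is to show that $R_{q,n}$ is a harmless, smooth remainder on the relevant set. Since $\overline{\Omega} \subseteq Q$, the difference set $\overline{\Omega}-\overline{\Omega}$ stays at a positive distance from $q\mathbb{Z}^n\setminus\{0\}$, so for $x-y$ with $x,y\in\overline{\Omega}$ and every $z\neq 0$ the point $(x-y)+qz$ is bounded away from the origin. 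Hence each summand $S_n(\cdot,(x-y)+qz)$ is smooth up to $t=0$, with all of its space-time derivatives vanishing there, and, using the Gaussian factor $e^{-|qz|^2/(4t)}$ (whose decay in $z$ is uniform for $t\in(0,T]$), the series for $R_{q,n}$ and for all its derivatives converge uniformly on $[0,T]\times(\overline{\Omega}-\overline{\Omega})$. Thus $-D_xR_{q,n}$ is a smooth kernel there, and I would then write $w_q[\mu]=w[\mu]+w_r[\mu]$, where $w[\mu]$ is the classical non-periodic double-layer heat potential built from $S_n$ and $w_r[\mu]$ is the integral operator with smooth kernel $-D_xR_{q,n}$.

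With this in hand, part (i) is almost immediate. The $q$-periodicity in space follows from $S_{q,n}(t,x+qz)=S_{q,n}(t,x)$, while smoothness on $(0,T]\times(\mathbb{R}^n\setminus\partial\mathbb{S}[\Omega])$ and the heat equation follow by differentiating under the integral sign, which is legitimate thanks to the local uniform convergence of the series, together with the fact that $S_{q,n}$ solves the heat equation off its singular set.

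For parts (ii) and (iii), I would invoke the classical theory for $w[\mu]$ as recalled from \cite{DaLu23}: it supplies the one-sided extensions $w^\pm[\mu]$, the jump relation $w^\pm[\mu]=\mp\tfrac{1}{2}\mu+w[\mu]$, the continuity of the normal derivative across $\partial\Omega$, and the continuity of $\mu\mapsto w^\pm[\mu]$ into the parabolic Schauder classes. Because $w_r[\mu]$ has a smooth kernel vanishing at $t=0$, it extends up to $\partial\Omega$ from both sides with no jump, its normal derivative is continuous across $\partial\Omega$, and $\mu\mapsto w_r[\mu]$ is continuous into the higher-regularity class $C_0^{\frac{1+\alpha}{2};1+\alpha}$. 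Adding the two contributions yields the extensions $w_q^\pm[\mu]$, the stated jump formulas, and the vanishing of the jump of the normal derivative. Finally, since the full potential is $q$-periodic, its parabolic Schauder norm over the periodic domain $[0,T]\times\overline{\mathbb{S}[\Omega]}$ (resp.\ $[0,T]\times\overline{\mathbb{S}[\Omega]^-}$) is controlled by its norm over a single cell, which transfers the continuity of the maps into $C_{0,q}^{\frac{1+\alpha}{2};1+\alpha}$.

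The main obstacle I expect is the careful treatment of the remainder operator $w_r$ within the anisotropic parabolic Schauder scale: one must verify the uniform convergence of the differentiated series up to $t=0$ and establish the parabolic H\"older estimates, in both the $\frac{1+\alpha}{2}$-in-time and the $(1+\alpha)$-in-space seminorms, needed to place $w_r[\mu]$ in $C_{0,q}^{\frac{1+\alpha}{2};1+\alpha}$ with continuous dependence on $\mu$. By comparison, once the classical theory of \cite{DaLu23} is granted, the jump relations, the normal-derivative continuity, and the passage from a single cell to the periodic domain are comparatively routine.
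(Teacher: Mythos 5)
Your proposal is correct and follows essentially the same route as the paper, which defers the proof to \cite[Thms.~2, 3]{Lu18} but explicitly describes the key idea as writing the periodic potential as the non-periodic one plus an integral operator with the nonsingular kernel $R_{q,n}=S_{q,n}-S_n$ (smooth near the origin by \cite[Thm.~1]{Lu18}), exactly as you do. The only point deserving a little extra care, and which you correctly flag, is verifying the uniform convergence of the differentiated series for $R_{q,n}$ up to $t=0$ and the resulting mapping properties of the smooth-kernel remainder in the anisotropic Schauder scale.
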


The main idea in the proof of Theorems \ref{thmsl} and \ref{thmdl} revolves around representing periodic layer potentials as the sum of their non-periodic counterparts and a remainder, which is an integral operator with a nonsingular kernel. This is feasible because the map
\begin{equation}\label{def:R}
R_{q,n}(t,x) := S_{q,n}(t,x) - S_n(t,x), \qquad\forall \,(t,x) \in ({\mathbb{R}}\times {\mathbb{R}}^{n})\setminus (\{0\} \times q \mathbb{Z}^n)
\end{equation}
can be extended by continuity to
$({\mathbb{R}}\times {\mathbb{R}}^{n})\setminus (\{0\} \times q (\mathbb{Z}^n \setminus\{0\}))$. Keeping the notation  $R_{q,n}$  for this extension, we have that
\[
R_{q,n} \in C^\infty (({\mathbb{R}}\times {\mathbb{R}}^{n})\setminus  (\{0\} \times q (\mathbb{Z}^n \setminus\{0\}))).
\]
In other words, $R_{q,n}$  is smooth in a neighborhood of the origin $(0,0)$. A proof of this assertion can be found in \cite[Thm. 1]{Lu18}.

The same idea can be used to 
recover the periodic counterpart of Theorem \ref{thm:clp}. We first need to introduce the pull-back of the boundary integral operators associated with $q$-periodic layer heat potentials. Let $\Omega$ be a bounded open subset of $\mathbb{R}^n$ of class $C^{1,\alpha}$ such that both 
$\Omega$ and $\Omega^-$ are connected. Let $\phi \in {\mathcal{A}}_{\partial\Omega,Q}^{1,\alpha}$. For
$\mu \in C_0^{\frac{\alpha}{2};\alpha}([0,T]\times \partial\Omega)$, we consider the operators
\begin{align*}
&V_q[\phi,\mu](t,\xi) :=  \int_0^t\int_{\phi(\partial\Omega)}  S_{q,n}(t-\tau,\phi(\xi)-y) \mu \circ (\phi^T)^{(-1)} (\tau,y) \,d\sigma_yd\tau  \\
& V_{q,l}[\phi,\mu](t,\xi) := \int_0^t\int_{\phi(\partial\Omega)} \partial_{x_l}S_{q,n}(t-\tau,\phi(\xi)-y) \mu \circ (\phi^T)^{(-1)} (\tau,y) \,d\sigma_yd\tau \quad \forall l \in\{1,\ldots,n\}\\
&W^*_{q}[\phi,\mu](t,\xi) :=\int_0^t\int_{\phi(\partial\Omega)}D_xS_{q,n}(t-\tau,\phi(\xi)-y)\cdot \nu_{\phi}(\xi) \mu \circ (\phi^T)^{(-1)} (\tau,y) \,d\sigma_yd\tau,
\end{align*}
for all $(t,\xi) \in [0,T] \times \partial\Omega$. Also, for $\psi \in C_0^{\frac{1+\alpha}{2};1+\alpha}([0,T]\times \partial\Omega)$ we set
\begin{align*}
     &W_q[\phi,\psi](t,\xi) := -\int_0^t\int_{\phi(\partial\Omega)}D_xS_{q,n}(t-\tau,\phi(\xi)-y)\cdot \nu_{\phi}(y) \psi \circ (\phi^T)^{(-1)} (\tau,y) \,d\sigma_yd\tau,
\end{align*}
for all $(t,\xi) \in  [0,T] \times \partial\Omega$. 
Similarly to the non-periodic scenario,  
the  function $V_q[\phi,\mu]$ is the $\phi$-pullback of the $q$-periodic single-layer potential restricted on the boundary  $[0,T] \times \phi(\partial\Omega)$, while $V_{q,l}[\phi,\mu]$ and $W^*_{q}[\phi,\mu]$ are respectively related to its $x_l$ and normal derivatives. The function $W_q[\phi,\psi]$ is instead related to the boundary behavior of the $q$-periodic double-layer potential.

We are now ready to present the main result of this section, concerning the smoothness of the mappings that associate $\phi$ with $V_q[\phi,\cdot]$, $V_{q,l}[\phi,\cdot]$, $W^*_{q}[\phi,\cdot]$, and $W_{q}[\phi,\cdot]$.

\begin{theorem}\label{thm:main}
Let $\alpha\in(0,1)$ and $T>0$. Let $\Omega$ be as in \eqref{cond Omega}.  
Then the maps that take $\phi \in \mathcal{A}^{1,\alpha}_{\partial\Omega,Q}$ to the following operators are all of class $C^\infty$:
 \begin{itemize}
  \item[(i)]  $V_q[\phi,\cdot] \in \mathcal{L} \left(C^{\frac{\alpha}{2};\alpha}_0([0,T]\times\partial\Omega), C^{\frac{1+\alpha}{2};1+\alpha}_0([0,T]\times\partial\Omega)\right)$,
  \item[(ii)]  $V_{q,l}[\phi,\cdot] \in \mathcal{L} \left( C^{\frac{\alpha}{2};\alpha}_0([0,T]\times\partial\Omega), C^{\frac{\alpha}{2};\alpha}_0([0,T]\times\partial\Omega)\right)$ 
  for all $l \in \{1,\ldots,n\}$,
  \item[(iii)]  $W^*_{q}[\phi,\cdot] \in \mathcal{L}\left( C^{\frac{\alpha}{2};\alpha}_0([0,T]\times\partial\Omega), C^{\frac{\alpha}{2};\alpha}_0([0,T]\times\partial\Omega)\right)$,
  \item[(iv)]   $W_q[\phi,\cdot] \in \mathcal{L} \left(C^{\frac{1+\alpha}{2};1+\alpha}_0([0,T]\times\partial\Omega), C^{\frac{1+\alpha}{2};1+\alpha}_0([0,T]\times\partial\Omega)\right)$\,.
 \end{itemize}
 \end{theorem}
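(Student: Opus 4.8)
The plan is to exploit the splitting $S_{q,n}=S_n+R_{q,n}$ from \eqref{def:R} in order to reduce each periodic operator to its already-understood classical counterpart plus a remainder whose kernel is \emph{globally nonsingular}. Writing, for example,
\[
V_q[\phi,\mu]=V[\phi,\mu]+\mathcal{R}^V[\phi,\mu],\qquad \mathcal{R}^V[\phi,\mu](t,\xi):=\int_0^t\!\!\int_{\phi(\partial\Omega)}R_{q,n}(t-\tau,\phi(\xi)-y)\,\mu\circ(\phi^T)^{(-1)}(\tau,y)\,d\sigma_y d\tau,
\]
and analogously $V_{q,l}=V_l+\mathcal{R}^{V_l}$, $W^*_q=W^*+\mathcal{R}^{W^*}$, and $W_q=W+\mathcal{R}^W$ (these last three obtained by replacing the kernel with $\partial_{x_l}R_{q,n}$, with $D_xR_{q,n}\cdot\nu_\phi(\xi)$, and with $-D_xR_{q,n}\cdot\nu_\phi(y)$, respectively), each periodic operator is the sum of a classical operator and a remainder. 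Since $\mathcal{A}^{1,\alpha}_{\partial\Omega,Q}$ is an open subset of $\mathcal{A}^{1,\alpha}_{\partial\Omega}$, Theorem \ref{thm:clp} already yields that the maps $\phi\mapsto V[\phi,\cdot],\,V_l[\phi,\cdot],\,W^*[\phi,\cdot],\,W[\phi,\cdot]$ are $C^\infty$ into the required operator spaces. As the sum of two $C^\infty$ maps into a Banach space is $C^\infty$, it remains only to prove the analogous statement for the four remainder maps.

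The crucial point is that the remainder kernels are nonsingular on the \emph{entire} region of integration. Indeed, the singular set of $R_{q,n}$ is $\{0\}\times q(\mathbb{Z}^n\setminus\{0\})$, whereas for $\phi\in\mathcal{A}^{1,\alpha}_{\partial\Omega,Q}$ one has $\phi(\partial\Omega)\subseteq Q$, so that the spatial argument $\phi(\xi)-y$ ranges in $Q-Q=\prod_{j=1}^{n}(-q_{jj},q_{jj})$, a set that meets $q\mathbb{Z}^n$ only at the origin. Since $R_{q,n}$ extends smoothly across $(0,0)$, the kernel $R_{q,n}(t-\tau,\phi(\xi)-y)$, together with each of its spatial derivatives, is smooth for all $(\xi,y)\in\partial\Omega\times\phi(\partial\Omega)$ and all $0\le\tau\le t\le T$. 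Hence each $\mathcal{R}^\bullet$ is a Volterra-type integral operator with a jointly smooth kernel; acting on $\mu\in C_0^{\frac{\alpha}{2};\alpha}$ (resp. $\psi\in C_0^{\frac{1+\alpha}{2};1+\alpha}$), the factor $\int_0^t$ forces vanishing at $t=0$, and the smoothness of the kernel yields output functions of arbitrarily high regularity, in particular lying in the target spaces of statements (i)--(iv).

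To obtain the $C^\infty$ dependence on $\phi$, I would carry out the change of variable $y=\phi(\eta)$ of Lemma \ref{rajacon}(i), which turns, for instance, $\mathcal{R}^V$ into
\[
\mathcal{R}^V[\phi,\mu](t,\xi)=\int_0^t\!\!\int_{\partial\Omega}R_{q,n}\bigl(t-\tau,\phi(\xi)-\phi(\eta)\bigr)\,\mu(\tau,\eta)\,\tilde\sigma_n[\phi](\eta)\,d\sigma_\eta d\tau,
\]
and similarly for the remaining three, where the normal factors become $\nu_\phi\circ\phi$. Now the dependence on $\phi$ enters only through: the affine (hence smooth) map $\phi\mapsto\bigl((\xi,\eta)\mapsto\phi(\xi)-\phi(\eta)\bigr)$, whose values remain in the smoothness region of $R_{q,n}$ by the previous paragraph; the superposition with the smooth function $R_{q,n}$ and its spatial derivatives; and the factors $\tilde\sigma_n[\phi]$ and $\nu_\phi\circ\phi$, which are real analytic in $\phi$ by Lemma \ref{rajacon}. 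Composing these smooth constituents with the fixed continuous linear Volterra integration in $(\tau,\eta)$ shows that $\phi\mapsto\mathcal{R}^\bullet[\phi,\cdot]$ is $C^\infty$ into the relevant $\mathcal{L}(\cdot,\cdot)$; adding the classical part proves the theorem. Note that only $C^\infty$ (not analyticity) is obtained, consistently with the non-analytic behaviour of $S_n$, and hence of $R_{q,n}$, at $t=0$.

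The main obstacle I anticipate lies in the superposition step: one must show that $\phi\mapsto R_{q,n}(t-\tau,\phi(\xi)-\phi(\eta))$ is a genuinely $C^\infty$ map from $\mathcal{A}^{1,\alpha}_{\partial\Omega,Q}$ into a parabolic Schauder space on $\{0\le\tau\le t\le T\}\times\partial\Omega\times\partial\Omega$ in which the integration acts continuously, and that the resulting operator indeed lands in the correct class $C_0^{\frac{j+\alpha}{2};j+\alpha}$ with the right Hölder exponent in time and the prescribed vanishing at $t=0$. This is precisely where the interplay between the time and space variables flagged after Theorem \ref{thm:clp} reappears; here, however, it is considerably milder than in the classical case, because the kernel is nonsingular: one may differentiate under the integral sign freely and estimate the parabolic Hölder norms directly, using that $\mu$ (resp. $\psi$) vanishes at $t=0$.
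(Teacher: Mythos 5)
Your proposal is correct and follows essentially the same route as the paper: split $S_{q,n}=S_n+R_{q,n}$, invoke Theorem \ref{thm:clp} for the classical part, and treat the remainder as an integral operator with a nonsingular kernel after the change of variables of Lemma \ref{rajacon}, using that $\phi(\xi)-\phi(\eta)$ avoids $q\mathbb{Z}^n\setminus\{0\}$ and the composition/integral-operator lemmas of \cite[Lem.~A.2, Lem.~A.3]{DaLu23}. The only cosmetic difference is in how the operator-norm smoothness of the remainder is extracted: you compose the smooth kernel map with the continuous linear assignment of kernel to operator, while the paper identifies $K[\phi,\cdot]$ with the partial differential $d_\mu K[\phi,\mu]$ of the jointly smooth map $(\phi,\mu)\mapsto K[\phi,\mu]$; both arguments are valid.
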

\begin{proof}
We confine ourselves to demonstrate the theorem for the map $\phi\mapsto V_q[\phi,\cdot]$ in point (i).  The proof for the operators in (ii), (iii), and (iv) can be carried out by a straightforward adaptation of the argument presented below.   In these cases, we will use statements (ii), (iii), and (iv) of Theorem \ref{thm:clp}, analogously to how we will use statement (i) of the same Theorem \ref{thm:clp} in the forthcoming argument.

As shown in \cite[Thm. 1]{Lu18}, the map $R_{q,n}$ defined in \eqref{def:R} is of class $C^\infty$ in the set
$({\mathbb{R}}\times {\mathbb{R}}^{n})\setminus  (\{0\} \times q (\mathbb{Z}^n \setminus\{0\}))$.
In particular, $R_{q,n}$  is smooth in a neighborhood of  $(0,0)\in \mathbb{R}\times \mathbb{R}^n$.

Let $(\phi, \mu) \in  {\mathcal{A}}_{\partial\Omega,Q}^{1,\alpha} \times C^{\frac{\alpha}{2};\alpha}_0([0,T]\times\partial\Omega)$. Clearly, definition \eqref{def:R} implies that
\begin{align}\label{thm:main1}
V_q[\phi,\mu](t,\xi) = V[\phi,\mu](t,\xi) + \int_0^t\int_{\phi(\partial\Omega)}R_{q,n}(t-\tau,\phi(\xi)-y) \mu \circ (\phi^T)^{(-1)} (\tau,y) \,d\sigma_yd\tau
\end{align}
for all $(t,\xi) \in [0,T] \times \partial \Omega$. By Theorem \ref{thm:clp} (i), the map that takes $\phi \in {\mathcal{A}}_{\partial\Omega,Q}^{1,\alpha}$ to  
\[
V[\phi,\cdot] \in \mathcal{L} \left( C^{\frac{\alpha}{2};\alpha}_0([0,T]\times\partial\Omega), C^{\frac{1+\alpha}{2};1+\alpha}_0([0,T]\times\partial\Omega) \right)
\]
is of class $C^\infty$.
We now consider the second term on the right-hand side of \eqref{thm:main1}. 
By Lemma \ref{rajacon} we have
\begin{equation*}
\begin{split}
\int_0^t\int_{\phi(\partial\Omega)}&R_{q,n}(t-\tau,\phi(\xi)-y) \mu \circ (\phi^T)^{(-1)} (\tau,y) \,d\sigma_y d\tau
\\
&=  \int_0^t\int_{\partial\Omega}R_{q,n}(t-\tau,\phi(\xi)-\phi(\eta)) \mu   (\tau,y) \tilde \sigma_n[\phi](\eta)\,d\sigma_\eta d\tau.
\end{split}
\end{equation*}
We note that 
 \[
\phi(\xi)-\phi(\eta) \notin q\mathbb{Z}^n \setminus \{0\}\qquad \forall\,(\xi,\eta) \in \partial  \Omega \times \partial \Omega.
 \]
Indeed, if it was that 
$(\xi,\eta) \in   \partial  \Omega \times \partial \Omega$ and $\phi(\xi)-\phi(\eta) \in q\mathbb{Z}^n\setminus\{0\}$, then we would have that
$\phi(\xi) \in \phi(\partial \Omega) + q\mathbb{Z}^n\setminus\{0\}$, which clearly cannot be. Then, by Lemma \ref{rajacon} and by  the results of 
\cite[Lemma A.2, Lemma A.3]{DaLu23} on non-autonomous composition operators and on time-dependent integral operators with non-singular kernels, we deduce that 
the map from ${\mathcal{A}}_{\partial\Omega,Q}^{1,\alpha} \times C^{\frac{\alpha}{2};\alpha}_0([0,T]\times\partial\Omega)$ to $C^{\frac{1+\alpha}{2};1+\alpha}_0([0,T]\times\partial\Omega)$ that takes $(\phi,\mu)$  to the function
\begin{equation*}
    K[\phi,\mu](t,\xi) := \int_0^t\int_{\partial\Omega}R_{q,n}(t-\tau,\phi(\xi)-\phi(\eta)) \mu   (\tau,y) \tilde \sigma_n[\phi](\eta)\,d\sigma_\eta d\tau \qquad \forall (t,\xi) \in [0,T] \times \partial\Omega,
\end{equation*}
is of class $C^\infty$. 

It remains to show that $\phi\mapsto K[\phi,\cdot]$ is $C^\infty$ from ${\mathcal{A}}_{\partial\Omega,Q}^{1,\alpha}$ to the operator space
\[
\mathcal{L} \left( C^{\frac{\alpha}{2};\alpha}_0([0,T]\times\partial\Omega), C^{\frac{1+\alpha}{2};1+\alpha}_0([0,T]\times\partial\Omega) \right)\,.
\]
Given that $K[\phi,\mu]$
is linear and continuous with respect to the variable $\mu$, we have
\begin{equation}\label{thm:main2}
    K[\phi,\cdot]= d_{\mu} K[\phi,\mu] \qquad \forall (\phi,\mu) \in {\mathcal{A}}_{\partial\Omega,Q}^{1,\alpha} \times C^{\frac{\alpha}{2};\alpha}_0([0,T]\times\partial\Omega),
\end{equation}
where the term on the right-hand side is the partial Frechet differential of $(\phi,\mu)\mapsto K[\phi,\mu]$ with respect to $\mu$, evaluated at the point $(\phi,\mu)$. Because $(\phi,\mu)\mapsto K[\phi,\mu]$ is a map of class $C^\infty$, the map that takes $(\phi,\mu)$ to $d_{\mu} K[\phi,\mu]$ is also of class $C^\infty$ from ${\mathcal{A}}_{\partial\Omega,Q}^{1,\alpha} \times C^{\frac{\alpha}{2};\alpha}_0([0,T]\times\partial\Omega)$ to the operator space $\mathcal{L} \left( C^{\frac{\alpha}{2};\alpha}_0([0,T]\times\partial\Omega), C^{\frac{1+\alpha}{2};1+\alpha}_0([0,T]\times\partial\Omega) \right)$. Hence, the map $(\phi,\mu) \mapsto K[\phi,\cdot]$ is  of class $C^\infty$ by \eqref{thm:main2}, and, since it does not depend on $\mu$, we conclude that  $\phi\mapsto K[\phi,\cdot]$ is $C^\infty$ from ${\mathcal{A}}_{\partial\Omega,Q}^{1,\alpha}$ to the operators space $\mathcal{L} \left( C^{\frac{\alpha}{2};\alpha}_0([0,T]\times\partial\Omega), C^{\frac{1+\alpha}{2};1+\alpha}_0([0,T]\times\partial\Omega) \right)$. 

Hence, the validity of the theorem for the map $\phi\mapsto V[\phi,\cdot]$ in point (i) has  now been proven.
\end{proof}

It is worth recalling that a result similar to Theorem \ref{thm:main} {has been previously  proven} in \cite{LaMu11} for periodic layer potentials corresponding to a general class of second-order elliptic equations. Later, these findings were used to study the effect of perturbations on physical quantities relevant to materials science and fluid mechanics. For instance, {we refer to \cite{DaLuMuPu22} which deals with the effective properties of periodic structures.}

\section{{A  transmission} problem} \label{s:trans}

The theorem presented in the preceding section, Theorem \ref{thm:main}, serves as a toolkit to analyze the solution to boundary value problems for the heat equation in spatially periodic domains. The primary goal of using this theorem is to demonstrate the smooth dependence of such solutions on shape perturbations. As emphasized in the introduction, the feasibility of employing Theorem \ref{thm:main} for this purpose relies on the applicability of boundary integral operators and layer potentials to derive solutions for boundary value problems. 

As an illustrative application, we consider a {periodic  transmission} problem. We will demonstrate that its solution depends smoothly on the shape of the transmission interface, the boundary data, and the {transmission} parameters. 

Now, let's introduce this specific problem. Consider $\alpha \in (0,1)$, $T>0$, and a bounded open subset $\Omega$ of $\mathbb{R}^n$ of class $C^{1,\alpha}$ such that both $\Omega$ and its exterior $\Omega^-$ are connected. Let  $\phi \in {\mathcal{A}}_{\partial\Omega,Q}^{1,\alpha}$.
We fix   {the transmission parameters} $\lambda^+,\lambda^- > 0$ and choose  $f \in C^{\frac{1+\alpha}{2};1+\alpha}_0([0,T]\times\partial \Omega)$ and 
$g \in C^{\frac{\alpha}{2};\alpha}_0([0,T]\times \partial\Omega)$. With this setup, we proceed to consider the following {transmission} problem:
\begin{align}\label{periodicidtran}
\left\{
\begin{array}{ll}
\partial_t u^+ - \Delta u^+ = 0 &\mbox{in }  (0, T]\times \mathbb{S}[\phi], \\
\partial_t u^- - \Delta u^- = 0 &\mbox{in }  (0, T]\times \mathbb{S}[\phi]^-, \\
u^+(t,x+qz) = u^+(t,x) &\forall\,(t,x) \in  [0,T] \times \overline{  \mathbb{S}[\phi]},\, \forall\, z\in \mathbb{Z}^n,  \\
u^-(t,x+qz) = u^-(t,x) &\forall\,(t,x) \in  [0,T] \times   \overline{\mathbb{S}[\phi]^-}, \, \forall\,z\in \mathbb{Z}^n, \\
u^+-u^-= f \circ (\phi^T)^{(-1)}&\mbox{on } [0, T]\times \partial \Omega,\\
\lambda^-\frac{\partial} {\partial \nu_\Omega} u^- - \lambda^+\frac{\partial} {\partial \nu_\Omega} u^+ = g\circ (\phi^T)^{(-1)} &\mbox{on } [0, T]\times \partial \Omega,\\
u^+(0,\cdot) = 0 &\mbox{in } \overline{ \mathbb{S}[\phi]},\\
u^-(0,\cdot) = 0 &\mbox{in }  \overline{ \mathbb{S}[\phi]^-}.
\end{array}
\right.
\end{align}
{Problem \eqref{periodicidtran} can be seen as the periodic version in $(0, T]\times \mathbb{S}[\phi]$ and $(0, T]\times \mathbb{S}[\phi]^-$ of the transmission problem for the heat equation considered  in {Hofmann}, Lewis, and Mitrea \cite{HoLeMi03}. We emphasize that there are other transmission problems for the heat equation that are relevant in applications, and in particular we refer to the one considered in Qiu, Rieder, Sayas, and Zhang \cite{QiRiSaZh19}.}

In \cite[Thm. 4]{LuMu18} it { has been proved} that the solution $(u^+,u^-)$ of \eqref{periodicidtran} exists, is unique, and belongs to a suitable product of  
Schauder spaces. Moreover, this solution can be expressed as a {pair} of periodic single-layer heat potentials, and the densities of these potentials are solutions to a particular system of boundary integral equations. To be precise, the following result holds:
\begin{theorem}\label{thm:uniqsol}
Let $\alpha\in(0,1)$ and $T>0$. Let $\Omega$ be as in \eqref{cond Omega}.
Let  $\phi \in {\mathcal{A}}_{\partial\Omega,Q}^{1,\alpha}$.
Let  $\lambda^+,\lambda^- > 0$ and  $f \in C^{\frac{1+\alpha}{2};1+\alpha}_0([0,T]\times\partial \Omega)$, 
$g \in C^{\frac{\alpha}{2};\alpha}_0([0,T]\times \partial\Omega)$. Then problem \eqref{periodicidtran} has
a unique solution
\[(u^+,u^-) \in C^{\frac{1+\alpha}{2};1+\alpha}_{0,q}([0,T]\times\overline{\mathbb{S}[\phi]}) \times 
C^{\frac{1+\alpha}{2};1+\alpha}_{0,q}([0,T]\times\overline{\mathbb{S}[\phi]^-}).\]
Moreover,
\begin{align*}
u^+ = v^+_q[\mu^+], \qquad u^-=v^-_q[\mu^-],
\end{align*}
where {$(\mu^+,\mu^-)$  is the unique solution in $C^{\frac{\alpha}{2};\alpha}_{0}([0,T] \times \phi(\partial\Omega)) \times C^{\frac{\alpha}{2};\alpha}_{0}([0,T] \times \phi(\partial\Omega))$} of the system of integral equations
\begin{equation}\label{sys1}
\begin{cases}
  v_q^+[\mu^+]_{|[0,T]\times\phi(\partial\Omega)} - v_q^-[\mu^-]_{|[0,T]\times\phi(\partial\Omega)} =f \circ (\phi^T)^{(-1)},\\
  \lambda^-\left(-\frac{1}{2}\mu^- + w^*_{q}[\mu^-]\right)  - 
 \lambda^+\left(\frac{1}{2}\mu^+ + w^*_{q}[\mu^+]\right)=g \circ (\phi^T)^{(-1)}.
\end{cases}
\end{equation}

  \end{theorem}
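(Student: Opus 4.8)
The plan is to reduce the transmission problem \eqref{periodicidtran} to the boundary integral system \eqref{sys1} by means of a single-layer ansatz, to prove uniqueness at the level of the partial differential equations through an energy estimate, and finally to solve \eqref{sys1} by a Fredholm-type argument whose injectivity step feeds back on that same uniqueness.

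First I would look for the solution in the form $u^+ = v_q^+[\mu^+]$, $u^- = v_q^-[\mu^-]$, with unknown densities $\mu^\pm \in C^{\frac{\alpha}{2};\alpha}_0([0,T]\times\phi(\partial\Omega))$. By Theorem \ref{thmsl}, for \emph{any} such densities the functions $v_q^\pm[\mu^\pm]$ automatically solve the heat equation in the respective periodic domains, are $q$-periodic in space, vanish at $t=0$, belong to the Schauder classes appearing in the statement, and the single-layer potential is continuous across the interface. Hence the first four and the last two lines of \eqref{periodicidtran} hold for free, and only the two transmission conditions on $[0,T]\times\partial\Omega$ remain. Using the continuity of the trace for the Dirichlet-type condition and the jump formula $\frac{\partial}{\partial\nu_\Omega}v_q^\pm[\mu] = \pm\frac12\mu + w_q^*[\mu]$ of Theorem \ref{thmsl}(iii) for the Neumann-type condition, these two conditions become precisely the two equations of \eqref{sys1}. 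Thus any pair solving \eqref{sys1} yields, through the potentials, a solution of \eqref{periodicidtran} with the asserted regularity, the latter by the continuity statement in Theorem \ref{thmsl}(ii).

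Next I would prove uniqueness for \eqref{periodicidtran} by an energy argument, which is also the key to the injectivity of the integral operator. Assuming $f=g=0$, I would multiply $\partial_t u^\pm = \Delta u^\pm$ by $\lambda^\pm u^\pm$, integrate over $\mathbb{S}[\phi]\cap Q$ and $\mathbb{S}[\phi]^-\cap Q$ respectively and over $[0,t]$, and integrate by parts in space. The contributions on the faces of $\partial Q$ cancel by $q$-periodicity, while the interface contributions combine, through the homogeneous conditions $u^+=u^-$ and $\lambda^+\partial_{\nu}u^+ = \lambda^-\partial_{\nu}u^-$, into an integral of $u^+(\lambda^+\partial_{\nu}u^+ - \lambda^-\partial_{\nu}u^-)$, which vanishes. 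One is left with $\frac12\frac{d}{dt}\bigl(\lambda^+\|u^+\|_{L^2}^2 + \lambda^-\|u^-\|_{L^2}^2\bigr) = -\lambda^+\|\nabla u^+\|_{L^2}^2 - \lambda^-\|\nabla u^-\|_{L^2}^2 \le 0$; together with the zero initial data and $\lambda^\pm>0$, this forces $u^\pm\equiv 0$.

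Finally I would show that \eqref{sys1} is uniquely solvable. Since the periodic single-layer boundary operator $\mu\mapsto v_q[\mu]_{|[0,T]\times\phi(\partial\Omega)}$ is an isomorphism from $C^{\frac{\alpha}{2};\alpha}_0$ onto $C^{\frac{1+\alpha}{2};1+\alpha}_0$ (obtainable as in the nonperiodic case via the $R_{q,n}$ splitting of Section \ref{s:period} and unique solvability of the periodic Dirichlet problem), the first equation of \eqref{sys1} gives $\mu^+-\mu^- = V_q^{-1}[f\circ(\phi^T)^{(-1)}]$; substituting into the second equation reduces the system to a single second-kind equation for $\mu^-$ whose principal part is the scalar multiple $-\frac{\lambda^++\lambda^-}{2}I$ (invertible since $\lambda^\pm>0$) plus the compact operator $(\lambda^--\lambda^+)w_q^*$ (compactness again via the $R_{q,n}$ splitting, reducing to the nonperiodic weakly singular Volterra operator). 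This equation is then of Fredholm type of index zero, so solvability follows from injectivity. Injectivity in turn follows from the energy uniqueness: a null right-hand side produces potentials solving the homogeneous problem \eqref{periodicidtran}, hence $v_q^\pm[\mu^\pm]$ vanish in their domains; by continuity of the single layer each $v_q[\mu^\pm]$ then solves the complementary interior/exterior Dirichlet problem with zero data and thus vanishes identically, whence the normal-derivative jump $\partial_{\nu}v_q^+[\mu^\pm]-\partial_{\nu}v_q^-[\mu^\pm]=\mu^\pm$ gives $\mu^\pm=0$. The main obstacle will be the functional-analytic analysis of the integral operators on the parabolic Schauder spaces, namely establishing that $V_q$ is an isomorphism and that $w_q^*$ is a compact (Volterra) perturbation of a multiple of the identity; this is considerably more delicate than in the elliptic case because of the coupling between time and space and the weak singularity of the heat kernel along $t=\tau$, and it is precisely the $R_{q,n}$ decomposition of Section \ref{s:period} that allows one to transfer these properties from the already understood nonperiodic setting.
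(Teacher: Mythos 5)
The paper gives no proof of this theorem: it is imported verbatim from \cite[Thm.~4]{LuMu18}, so there is no internal argument to compare yours against. That said, your outline reconstructs precisely the argument that the cited reference (and the remainder of Section~\ref{s:trans}) rests on: the single-layer ansatz reduces \eqref{periodicidtran} to the system \eqref{sys1} via the mapping properties and jump relations of Theorem~\ref{thmsl}; the invertibility of the boundary trace of the periodic single layer (\cite[Thm.~2]{LuMu18}) lets you eliminate $\mu^+$, leaving a second-kind equation for $\mu^-$ with invertible scalar principal part $-\tfrac{\lambda^++\lambda^-}{2}I$ plus the compact perturbation $(\lambda^--\lambda^+)w_q^*$ (\cite[Thm.~1]{LuMu18}), so that Fredholm theory reduces everything to injectivity, which your energy estimate supplies. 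This is exactly the reduction the paper itself performs later in Proposition~\ref{prop int eq} and Lemma~\ref{lem: intervibility of K_gamma}, so your proposal is the intended proof. Two points deserve more care than you give them. First, differentiating $t\mapsto\int(u(t,\cdot))^2$ and integrating by parts for solutions that are only of parabolic Schauder class up to the boundary is not automatic; the paper handles the analogous step in Lemma~\ref{lem: intervibility of K_gamma} by invoking \cite[Lem.~5 and Prop.~2]{Lu18} together with Verchota's approximation of the domain \cite[Thm.~1.12]{Ve84}, and you would need the same device. Second, in the injectivity step it is shorter to note that $v_q^\pm[\mu^\pm]=0$ forces the boundary traces $v_q[\mu^\pm]_{|[0,T]\times\phi(\partial\Omega)}$ to vanish and then apply the injectivity of the single-layer trace operator directly; your detour through the complementary interior/exterior Dirichlet problems also works, but it silently assumes uniqueness for the exterior periodic Dirichlet problem, which you would then have to establish by a further energy argument.
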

Keeping in mind Theorem \ref{thm:uniqsol}, we will use the notation
 \[
 (u^+[\phi,\lambda^+,\lambda^-,f,g], u^-[\phi,\lambda^+,\lambda^-,f,g])
\]
to denote the unique solution of problem \eqref{periodicidtran}.  
  
Moreover, thanks to Theorem \ref{thm:uniqsol}, we have a representation of the unique solution of the transmission problem as a pair of single-layer potentials with densities that solve the system of boundary integral equations in \eqref{sys1}. Then, to understand how the solution 
depends upon variations of $\phi$, $\lambda^+$, $\lambda^-$, $f$, and $g$, we plan to first understand how the densities depend on such parameters. To maintain consistency within the functional spaces, we have to perform a $\phi$-pullback of the integral equations in \eqref{sys1}. This transformation results in a system of $\phi$-dependent integral equations defined on the fixed domain $[0,T] \times \partial \Omega$. This is achieved through a change of variables applied to \eqref{sys1}, leading to the following proposition:
\begin{proposition}\label{periodiccp}
Let $\alpha\in(0,1)$ and $T>0$. Let $\Omega$ be as in \eqref{cond Omega}. Let  $\phi \in {\mathcal{A}}_{\partial\Omega,Q}^{1,\alpha}$.
Let  $\lambda^+,\lambda^- > 0$ and  $f \in C^{\frac{1+\alpha}{2};1+\alpha}_0([0,T]\times\partial \Omega)$, 
$g \in C^{\frac{\alpha}{2};\alpha}_0([0,T]\times \partial\Omega)$.  
Then the unique  solution 
\begin{equation*}
(u^+[\phi,\lambda^+,\lambda^-,f,g], u^-[\phi,\lambda^+,\lambda^-,f,g]) \in C^{\frac{1+\alpha}{2};1+\alpha}_{0,q}([0,T]\times\overline{\mathbb{S}[\phi]}) \times 
C^{\frac{1+\alpha}{2};1+\alpha}_{0,q}([0,T]\times\overline{\mathbb{S}[\phi]^-})    
\end{equation*}
of problem \eqref{periodicidtran} can
be written as 
\[
u^+[\phi,\lambda^+,\lambda^-,f,g] = v_q^+[\rho^+ \circ (\phi^T)^{(-1)}] \qquad u^-[\phi,\lambda^+,\lambda^-,f,g]=v^-_q[\rho^- \circ (\phi^T)^{(-1)}],
\]
where { $(\rho^+,\rho^-)$ is the unique solution in  $C_0^{\frac{\alpha}{2};\alpha}([0,T] \times \partial\Omega) \times C_0^{\frac{\alpha}{2};\alpha}([0,T] \times \partial\Omega)$}  of the system of integral equations
\begin{equation}\label{sys2}
\begin{cases}
  V_q[\phi,\rho^+]  - V_q[\phi,\rho^-]  =f  ,
  \\
  \lambda^-\left(-\frac{1}{2}\rho^- + W_{q}^*[\phi,\rho^-]\right)  - 
  \lambda^+\left(\frac{1}{2}\rho^+ + W_{q}^*[\phi,\rho^+]\right)=g .
\end{cases}
\end{equation}
\end{proposition}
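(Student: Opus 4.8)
The plan is to obtain \eqref{sys2} from the system \eqref{sys1} of Theorem \ref{thm:uniqsol} by a pure change of variables, transporting the densities from $\phi(\partial\Omega)$ back to the fixed boundary $\partial\Omega$. Theorem \ref{thm:uniqsol} already furnishes a unique pair $(\mu^+,\mu^-)$ of densities in $C_0^{\frac{\alpha}{2};\alpha}([0,T]\times\phi(\partial\Omega))$ solving \eqref{sys1}, together with the representation $u^\pm=v_q^\pm[\mu^\pm]$. I would set
\[
\rho^\pm := \mu^\pm \circ \phi^T, \qquad\text{equivalently}\qquad \mu^\pm = \rho^\pm \circ (\phi^T)^{(-1)},
\]
so that the asserted representation $u^\pm = v_q^\pm[\rho^\pm\circ(\phi^T)^{(-1)}]$ is immediate. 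Since $\phi\in\mathcal{A}^{1,\alpha}_{\partial\Omega,Q}$, composition with the $C^{1,\alpha}$ diffeomorphism $\phi^T$ and with its inverse is a linear homeomorphism between $C_0^{\frac{\alpha}{2};\alpha}([0,T]\times\phi(\partial\Omega))$ and $C_0^{\frac{\alpha}{2};\alpha}([0,T]\times\partial\Omega)$ that preserves vanishing at $t=0$; hence $\rho^\pm$ belong to the space claimed in the statement, and the correspondence $(\mu^+,\mu^-)\leftrightarrow(\rho^+,\rho^-)$ is a bijection.

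The core of the argument is to recognize that the pulled-back operators $V_q[\phi,\cdot]$ and $W_q^*[\phi,\cdot]$ introduced before Theorem \ref{thm:main} are precisely the compositions of the periodic boundary operators with $\phi$. I would evaluate the first equation of \eqref{sys1} at $x=\phi(\xi)$ with $\xi\in\partial\Omega$: because the periodic single-layer potential is continuous across $\phi(\partial\Omega)$ (Theorem \ref{thmsl}(i)), each trace $v_q^\pm[\mu^\pm]_{|[0,T]\times\phi(\partial\Omega)}$ coincides with the corresponding boundary integral, and substituting $\mu^\pm=\rho^\pm\circ(\phi^T)^{(-1)}$ turns it into $V_q[\phi,\rho^\pm](t,\xi)$, while the right-hand side becomes $f\circ(\phi^T)^{(-1)}(t,\phi(\xi))=f(t,\xi)$. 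This gives the first line of \eqref{sys2}. For the second equation I would argue identically: the operator $w_q^*$, built with the normal $\nu_\phi$ to $\phi(\partial\Omega)$, pulls back to $W_q^*[\phi,\cdot]$, the $\pm\frac12$ zeroth-order terms already encode the normal-derivative jump relations of Theorem \ref{thmsl}(iii), and $\frac12\mu^\pm(t,\phi(\xi))=\frac12\rho^\pm(t,\xi)$, so the equation becomes exactly the second line of \eqref{sys2}.

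The only genuinely non-formal point is the justification of the identities $v_q[\rho\circ(\phi^T)^{(-1)}](t,\phi(\xi))=V_q[\phi,\rho](t,\xi)$ and its analogue for $w_q^*$, which rest on the change of variables $y=\phi(\eta)$ in the integrals over $\phi(\partial\Omega)$: the surface-measure Jacobian is supplied by $\tilde\sigma_n[\phi]$ from Lemma \ref{rajacon}(i) and the normal field by $\nu_\phi\circ\phi$ from Lemma \ref{rajacon}(ii). These are, however, nothing other than the defining relations of $V_q[\phi,\cdot]$ and $W_q^*[\phi,\cdot]$, so no new estimate is required. Finally, uniqueness of $(\rho^+,\rho^-)$ transfers from that of $(\mu^+,\mu^-)$ through the bijection above, since any solution of \eqref{sys2} pushed forward by $(\phi^T)^{(-1)}$ would yield a solution of \eqref{sys1}, which is unique. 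I expect the main obstacle to be purely bookkeeping: keeping track of which boundary, $\partial\Omega$ or $\phi(\partial\Omega)$, each function and each normal lives on, and making explicit the implicit reinterpretation of $v_q^\pm$ and $w_q^*$ as supported on $\phi(\partial\Omega)$ (the boundary of $\mathbb{I}[\phi]$, which plays the role of $\Omega$ in the $\phi$-perturbed problem) rather than on $\partial\Omega$.
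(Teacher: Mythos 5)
Your proposal is correct and follows exactly the route the paper indicates: the paper derives Proposition \ref{periodiccp} from Theorem \ref{thm:uniqsol} "through a change of variables applied to \eqref{sys1}", which is precisely your pullback $\rho^\pm:=\mu^\pm\circ\phi^T$ combined with the observation that $V_q[\phi,\cdot]$ and $W_q^*[\phi,\cdot]$ are by definition the $\phi$-pullbacks of the boundary traces appearing in \eqref{sys1}. Your handling of the transfer of uniqueness through the composition bijection and of the bookkeeping between $\partial\Omega$ and $\phi(\partial\Omega)$ is exactly what the paper leaves implicit.
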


Our next step is to understand the dependence of the solution $(\rho^+,\rho^-) $ of \eqref{sys2} upon $(\phi,\lambda^+,\lambda^-,f,g)$. To achieve this,  we first observe that system \eqref{sys2} can be equivalently reformulated as a single integral equation. In fact, by the linearity of the single-layer potential $V_q[\phi,\cdot]$, we can rewrite the first equation in \eqref{sys2} as
\begin{equation}\label{eq:vrhof}
V_q[\phi,\rho^+ -\rho^-] = f.
\end{equation}
Then, by leveraging the invertibility of the single-layer potential (cf.~\cite[Thm.~2]{LuMu18}) and using equality \eqref{eq:vrhof}, we can express either $\rho^+$ or $\rho^-$ in terms of the other. Substituting this expression into the second equation of \eqref{sys2}, we arrive at the following proposition:
\begin{proposition}\label{prop int eq}
Let $\alpha\in(0,1)$ and $T>0$. Let $\Omega$ be as in \eqref{cond Omega}. Take  $\phi \in {\mathcal{A}}_{\partial\Omega,Q}^{1,\alpha}$.
Assume  $\lambda^+,\lambda^- > 0$ and take $f \in C^{\frac{1+\alpha}{2};1+\alpha}_0([0,T]\times\partial \Omega)$ and 
$g \in C^{\frac{\alpha}{2};\alpha}_0([0,T]\times \partial\Omega)$. Define the contrast {transmission} parameter $\lambda_\mathrm{\bf c}[\lambda^+,\lambda^-]$ by
\begin{equation}\label{eq def lambda}
    \lambda_\mathrm{\bf c}[\lambda^+,\lambda^-] := \frac{\lambda^- - \lambda^+}{\lambda^- + \lambda^+}\,.
\end{equation}
If $(\rho^+,\rho^-) \in C_0^{\frac{\alpha}{2};\alpha}([0,T] \times \partial\Omega) \times C_0^{\frac{\alpha}{2};\alpha}([0,T] \times \partial\Omega)$ is the unique solution of the system of integral equations \eqref{sys2}, then $\rho^-$ is the unique solution in $C_0^{\frac{\alpha}{2};\alpha}([0,T] \times \partial\Omega)$ of the integral equation
\begin{equation}\label{int eq rho-}
    \rho^- - 2\lambda_\mathrm{\bf c}[\lambda^+,\lambda^-] W_{q}^*[\phi,\rho^-] = -\frac{2}{\lambda^- + \lambda^+} \left( \lambda^+ \left( \frac{1}{2}I + W_{q}^*[\phi,\cdot]\right)\left( V_q[\phi,\cdot]^{(-1)}(f)\right) + g \right)
\end{equation}
and $\rho^+$ is given by 
\begin{equation}\label{eq rho+}
    \rho^+ = \rho^- + V_q[\phi,\cdot]^{(-1)}(f).
\end{equation}
\end{proposition}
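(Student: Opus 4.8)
The plan is to treat this as a direct algebraic reduction of the two-by-two system \eqref{sys2} to the single equation \eqref{int eq rho-}, supplemented by a transfer-of-uniqueness argument. The only analytic ingredient required is the invertibility of the periodic single-layer operator $V_q[\phi,\cdot]$ on $C_0^{\frac{\alpha}{2};\alpha}([0,T]\times\partial\Omega)$, which is available from \cite[Thm.~2]{LuMu18} and which already underlies the passage from the first equation of \eqref{sys2} to \eqref{eq:vrhof}.

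First I would exploit the linearity of $V_q[\phi,\cdot]$ in its density to rewrite the first equation of \eqref{sys2} as $V_q[\phi,\rho^+-\rho^-]=f$, and then apply $V_q[\phi,\cdot]^{(-1)}$ to obtain $\rho^+-\rho^-=V_q[\phi,\cdot]^{(-1)}(f)$, which is exactly \eqref{eq rho+}. Writing $h:=V_q[\phi,\cdot]^{(-1)}(f)$ for brevity, the next step is to substitute $\rho^+=\rho^-+h$ into the second equation of \eqref{sys2} and use the linearity of $W_q^*[\phi,\cdot]$. Collecting the terms carrying $\rho^-$ and those carrying $W_q^*[\phi,\rho^-]$, the second equation becomes
\[
-\frac{\lambda^-+\lambda^+}{2}\,\rho^- + (\lambda^--\lambda^+)\,W_q^*[\phi,\rho^-] - \lambda^+\left(\tfrac{1}{2}I + W_q^*[\phi,\cdot]\right)(h) = g.
\]
Dividing by $-\tfrac{\lambda^-+\lambda^+}{2}$ (nonzero since $\lambda^\pm>0$), moving the term involving $h$ to the right-hand side, and recognizing $\frac{2(\lambda^--\lambda^+)}{\lambda^-+\lambda^+}=2\lambda_\mathrm{\bf c}[\lambda^+,\lambda^-]$ via \eqref{eq def lambda} turns this into precisely \eqref{int eq rho-}. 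This shows that the component $\rho^-$ of a solution of \eqref{sys2} satisfies \eqref{int eq rho-}.

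It remains to establish that \eqref{int eq rho-} admits a \emph{unique} solution. Here I would observe that every manipulation above is reversible: starting from any $\rho^-\in C_0^{\frac{\alpha}{2};\alpha}([0,T]\times\partial\Omega)$ solving \eqref{int eq rho-} and defining $\rho^+$ by \eqref{eq rho+}, one recovers the first equation of \eqref{sys2} immediately from $V_q[\phi,h]=f$, while multiplying \eqref{int eq rho-} back by $-\tfrac{\lambda^-+\lambda^+}{2}$ and undoing the substitution yields the second equation of \eqref{sys2}. Hence the affine map $\rho^-\mapsto(\rho^-+h,\rho^-)$ is a bijection from the solution set of \eqref{int eq rho-} onto the solution set of \eqref{sys2}, with inverse the projection $(\rho^+,\rho^-)\mapsto\rho^-$. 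Since \eqref{sys2} has a unique solution by Theorem \ref{thm:uniqsol} together with Proposition \ref{periodiccp}, this bijection forces \eqref{int eq rho-} to have a unique solution as well.

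I do not anticipate a genuine obstacle here; the argument is essentially bookkeeping. The one point meriting care is ensuring that the reduction is a true equivalence rather than a one-way implication, so that the uniqueness already known for \eqref{sys2} actually transfers to \eqref{int eq rho-}. This is exactly what the reversibility of each step guarantees, and it hinges on the invertibility of $V_q[\phi,\cdot]$, which renders the change of unknowns $(\rho^+,\rho^-)\leftrightarrow(\rho^-,h)$ lossless.
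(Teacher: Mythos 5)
Your reduction of \eqref{sys2} to \eqref{int eq rho-} and \eqref{eq rho+} is correct and coincides with the paper's own computation: apply $V_q[\phi,\cdot]^{(-1)}$ to the first equation, substitute into the second, collect terms, and normalize by $-\tfrac{2}{\lambda^-+\lambda^+}$. The one place where you diverge is the justification of \emph{uniqueness} for \eqref{int eq rho-}. The paper settles this by citing the invertibility of $I-2\gamma W_q^*[\phi,\cdot]$ for $\gamma\in(-1,1)$ (from \cite[Lem.~2]{LuMu18}, noting $\lambda_\mathrm{\bf c}[\lambda^+,\lambda^-]\in(-1,1)$ when $\lambda^\pm>0$), i.e.\ by an operator-theoretic fact about the left-hand side of \eqref{int eq rho-} alone. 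You instead observe that every step of the reduction is reversible, so that $\rho^-\mapsto(\rho^-+V_q[\phi,\cdot]^{(-1)}(f),\rho^-)$ is a bijection between the solution sets of \eqref{int eq rho-} and \eqref{sys2}, and you transfer the uniqueness already asserted for \eqref{sys2} in Proposition \ref{periodiccp}. Both are valid; your route is more self-contained at this point (no extra lemma needed), while the paper's choice makes the invertibility of $I-2\lambda_\mathrm{\bf c}W_q^*[\phi,\cdot]$ explicit, which is then reused immediately afterwards in the definition of the operator $\Lambda$ and in Lemma \ref{lem: intervibility of K_gamma}, so it is not wasted effort in context.
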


\begin{proof}
As already noted, equation \eqref{eq rho+} follows by the first equation of \eqref{sys2} and by the linearity and invertibility of the operator $V_q[\phi,\cdot]$ from $C^{\frac{\alpha}{2};\alpha}_0([0,T]\times\partial\Omega)$ to $C^{\frac{1+\alpha}{2};1+\alpha}_0([0,T]\times\partial\Omega)$ (cf.~\cite[Thm.~2]{LuMu18}). Then, substituting \eqref{eq rho+} into the second equation in \eqref{sys2} and using the linearity of $W_{q}^*[\phi,\cdot]$, we obtain 
\begin{align*}
    \lambda^-\left(-\frac{1}{2}\rho^- + W_{q}^*[\phi,\rho^-]\right) &- \lambda^+ \left( \frac{1}{2}\rho^- + \frac{1}{2} V_q[\phi,\cdot]^{(-1)}(f)  \right)
    \\
    &- \lambda^+ \left( W_{q}^*[\phi,\rho^-] + W_{q}^*\left[\phi,V_q[\phi,\cdot]^{(-1)}(f)\right]\right) = g,
\end{align*}
which, after a rearrangement, yields
\begin{align*}
    (\lambda^- + \lambda^+) \left(- \frac{1}{2}\rho^-\right) &+ (\lambda^- - \lambda^+)W_{q}^*[\phi,\rho^-] 
    \\
    &= \lambda^+ \left(\frac{1}{2}I + W_{q}^*[\phi,\cdot]\right)\left( V_q[\phi,\cdot]^{(-1)}(f)\right) + g.
\end{align*}
Multiplying both sides of the above equation by $-\frac{2}{\lambda^- + \lambda^+}$, we obtain \eqref{int eq rho-}, which, in view of \cite[Lem. 2]{LuMu18}, is well known to have a unique solution (cf. the definition of $\lambda_\mathrm{\bf c}[\lambda^+,\lambda^-]$ in \eqref{eq def lambda}).
\end{proof}

In the proof of Proposition \ref{prop int eq}, we utilized the invertibility of the operator $I - 2\gamma W_{q}^*[\phi,\cdot]$ for $\gamma\in (-1,1)$, a fact established in \cite[Lem. 2]{LuMu18}. Even for $\gamma=1$, this operator remains invertible, as follows from \cite[Lem.~6]{Lu18}. In the subsequent lemma, we demonstrate the invertibility of this operator for $\gamma=-1$ as well, thereby establishing its invertibility for all $\gamma\in[-1,1]$.

\begin{lemma}\label{lem: intervibility of K_gamma} 
    Let $\alpha\in(0,1)$ and $T>0$. Let $\Omega$ be as in \eqref{cond Omega}. Let  $\phi \in {\mathcal{A}}_{\partial\Omega,Q}^{1,\alpha}$ and $\gamma \in [-1,1]$. Then the operator from $C_0^{\frac{\alpha}{2};\alpha}([0,T] \times \partial\Omega)$ into itself that maps $\rho$ to the function $\rho -2\gamma W_{q}^*[\phi,\rho]$  
    is a linear homeomorphism.
\end{lemma}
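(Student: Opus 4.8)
The cases $\gamma\in(-1,1)$ and $\gamma=1$ are already covered by \cite[Lem.~2]{LuMu18} and \cite[Lem.~6]{Lu18}, respectively, so the plan is to settle the remaining endpoint $\gamma=-1$, that is, to show that $\rho\mapsto \rho+2W^*_q[\phi,\rho]$ is a linear homeomorphism of $X:=C_0^{\frac{\alpha}{2};\alpha}([0,T]\times\partial\Omega)$ onto itself. I would argue by the Fredholm alternative: first establish that the operator is Fredholm of index zero, then prove that it is injective, and finally invoke the open mapping theorem to upgrade the resulting bijection to a homeomorphism.

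For the Fredholm property, I would exploit the splitting $S_{q,n}=S_n+R_{q,n}$ of \eqref{def:R}, which writes $W^*_q[\phi,\cdot]$ as the sum of the (pulled-back) non-periodic operator $W^*[\phi,\cdot]$ and an integral operator whose kernel involves only the smooth remainder $R_{q,n}$ near the diagonal; the latter is compact on $X$. Since the normal component $D_xS_n(t-\tau,\cdot)\cdot\nu_\phi$ has a weaker singularity than $D_xS_n$ itself, the operator $W^*[\phi,\cdot]$ is compact on $X$ as well, exactly as in the corresponding elliptic situation. Thus $2W^*_q[\phi,\cdot]$ is compact and $\rho\mapsto \rho+2W^*_q[\phi,\rho]$ is a compact perturbation of the identity, hence Fredholm of index zero. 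Equivalently, one may observe that this is the same Fredholmness that underlies \cite[Lem.~2]{LuMu18} and \cite[Lem.~6]{Lu18}, the argument there being uniform in $\gamma$.

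The core of the proof is injectivity. Suppose $\rho+2W^*_q[\phi,\rho]=0$ and set $\mu:=\rho\circ(\phi^T)^{(-1)}\in C_0^{\frac{\alpha}{2};\alpha}([0,T]\times\phi(\partial\Omega))$, so that $\frac12\mu+w^*_q[\mu]=0$ on $[0,T]\times\phi(\partial\Omega)$. By the single-layer jump relation of Theorem \ref{thmsl}(iii), applied with $\mathbb{I}[\phi]$ in place of $\Omega$, the function $u^+:=v_q^+[\mu]$ solves the heat equation in $(0,T]\times\mathbb{S}[\phi]$, is $q$-periodic, vanishes at $t=0$, and has $\frac{\partial}{\partial\nu_\phi}u^+=0$ on $[0,T]\times\partial\mathbb{S}[\phi]$. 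A standard energy estimate for the interior periodic Neumann problem then forces $u^+\equiv 0$; by the continuity of the periodic single-layer potential (Theorem \ref{thmsl}(i)), the boundary trace of $u^-:=v_q^-[\mu]$ equals that of $u^+$ and hence vanishes, so $u^-$ solves the heat equation in $(0,T]\times\mathbb{S}[\phi]^-$ with zero $q$-periodic Dirichlet data and zero initial datum. A second energy estimate, this time for the exterior periodic Dirichlet problem, yields $u^-\equiv 0$. Finally, subtracting the two normal-derivative jump relations gives $\mu=\frac{\partial}{\partial\nu_\phi}v_q^+[\mu]-\frac{\partial}{\partial\nu_\phi}v_q^-[\mu]=0$, whence $\rho=0$.

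Combining Fredholmness of index zero with injectivity yields bijectivity, and the open mapping theorem makes the inverse continuous, proving the claim. I expect the Fredholm step to be the main obstacle: one must verify carefully that $W^*_q[\phi,\cdot]$ is compact on the anisotropic parabolic Schauder space $X$ — a point where the interplay between the parabolic scaling of $D_xS_{q,n}$ and the mere $C^{1,\alpha}$ regularity of the interface is delicate — and one must make precise the two parabolic uniqueness results (interior Neumann, exterior Dirichlet) in the $q$-periodic geometry.
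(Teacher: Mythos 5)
Your proposal is correct and follows the same overall strategy as the paper: reduce to the endpoint $\gamma=-1$, observe that compactness of $W_q^*[\phi,\cdot]$ makes $I+2W_q^*[\phi,\cdot]$ Fredholm of index zero, and prove injectivity by showing that $v_q^+[\rho\circ(\phi^T)^{(-1)}]$ solves the homogeneous $q$-periodic interior Neumann problem and killing it with the energy function $e(t)=\int_\Omega(u(t,y))^2\,dy$. Two remarks on where you diverge. First, for the Fredholm step the paper simply cites the compactness of $W_q^*[\phi,\cdot]$ from \cite[Thm.~1]{LuMu18} rather than re-deriving it from the splitting $S_{q,n}=S_n+R_{q,n}$; your worry that this is the ``main obstacle'' is therefore moot, since it is an off-the-shelf ingredient here (your own fallback observation that the same Fredholmness underlies the $\gamma\in(-1,1]$ cases is exactly the right move). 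Second, and more substantively, after concluding $v_q^+[\rho\circ(\phi^T)^{(-1)}]=0$ the paper finishes in one stroke by invoking the invertibility of the boundary single-layer operator $V_q[\phi,\cdot]$ from \cite[Thm.~2]{LuMu18}, whereas you run the classical two-sided argument: continuity of the single layer gives zero exterior Dirichlet data, a second (periodic exterior) energy estimate gives $v_q^-\equiv 0$, and the difference of the two normal-derivative jump relations recovers the density. Your route is more self-contained, at the price of a second uniqueness argument in the periodic exterior geometry (where one must check that the flux terms on $\partial Q$ cancel by periodicity); the paper's route is shorter but leans on a nontrivial cited theorem. Both are sound; the only technical point you gloss over, which the paper is careful about, is the justification of differentiating $e(t)$ under the integral sign in the merely $C^{1,\alpha}$ geometry, for which it refers to \cite[Lem.~5 and Prop.~2]{Lu18}.
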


\begin{proof}
As previously noted, the assertion for $\gamma\in (-1,1)$ and $\gamma=1$ follows by \cite[Lem. 2]{LuMu18} and \cite[Lem. 6]{Lu18}, respectively (note that for $\gamma \in (-1,1)$, there exist $\gamma^+,\gamma^->0$ such that $\gamma = (\gamma^- - \gamma^+)/(\gamma^- + \gamma^+)$). Thus, the task at hand is to demonstrate the statement for $\gamma=-1$.

Due to the compactness of $W^*_q[\phi,\cdot]$ (cf.~\cite[Thm. 1]{LuMu18}), the operator $I -2\gamma W_{q}^*[\phi,\cdot]$ is a  Fredholm operator of index zero. Consequently,  to demonstrate that it is a linear homeomorphism, it suffices to prove its injectivity. So, let $\rho \in C_0^{\frac{\alpha}{2};\alpha}([0,T] \times \partial\Omega)$ be such that
    \[
    \rho + 2W_{q}^*[\phi,\rho] = 0 \quad \text{on } [0,T] \times \partial\Omega.
    \]
    By Theorem \ref{thmsl}, the single-layer potential $v_q^+[\rho \circ (\phi^T)^{(-1)}]$ belongs to $C_{0,q}^{\frac{1+\alpha}{2}; 1+\alpha}\big([0,T] \times \overline{\mathbb{S}[\phi]}\big)$ and is a solution of the following $q$-periodic homogeneous interior Neumann problem:
 \begin{equation}\label{q periodic neumann problem: u}
    \begin{cases}
    \partial_t u - \Delta u = 0 &\mbox{in }  (0, T]\times \mathbb{S}[\phi], \\
    u(t,x+qz) = u(t,x) &\forall\,(t,x) \in  [0,T] \times \overline{  \mathbb{S}[\phi]},\, \forall\,z\in \mathbb{Z}^n,  \\
    \frac{\partial} {\partial \nu_\Omega} u = 0 &\mbox{on } [0, T]\times \partial \Omega,\\
    u(0,\cdot) = 0 &\mbox{in } \overline{ \mathbb{S}[\phi]}\, .
    \end{cases}
    \end{equation}
We proceed to prove that $u=0$ is the sole solution of problem \eqref{q periodic neumann problem: u}  by a standard energy argument. It will follow that $v_q^+[\rho \circ (\phi^T)^{(-1)}]=0$ and, by the invertibility  of the restriction to $[0,T]\times \phi(\partial \Omega)$ of the single-layer potential (cf.~\cite[Thm.~2]{LuMu18}), we will conclude that $\rho \circ (\phi^T)^{(-1)}=0$, and thus that $\rho=0$.  

So, let $u \in C_{0,q}^{\frac{1+\alpha}{2};1+\alpha}([0,T]\times\overline{\mathbb{S}[\phi]})$ be a solution of \eqref{q periodic neumann problem: u}. Let 
\begin{equation*}
    e(t) := \int_{\Omega} (u(t,y))^2 \,dy \qquad \forall t \in [0,T].
\end{equation*}
Given that $u$ is uniformly continuous on $[0,T]\times\overline{\mathbb{S}[\phi]}$, we can see that $t\mapsto e(t)$ is continuous on $[0,T]$. Furthermore,  we can demonstrate that $e$ belongs to $C^1([0,T])$.  A detailed proof is provided in \cite[Lem. 5 and Prop. 2]{Lu18},  and it is based on classical differentiation theorems for integrals depending on a parameter, along with a specific approximation of the support of integration (see Verchota \cite[Thm. 1.12, p. 581]{Ve84}).  Following the argument in the same reference (\cite[Lem. 5 and Prop. 2]{Lu18}), we can also verify that
\begin{equation*}
    \frac{d}{dt} e(t) = -2 \int_{\Omega} |Du(t,y)|^2 \,dy + 2 \int_{\partial\Omega} u(t,y) \frac{\partial} {\partial \nu_\Omega} u(t,y) \,d\sigma_y = -2 \int_{\Omega} |Du(t,y)|^2 \,dy \quad \forall t \in (0,T),
\end{equation*}
where the integral on $\partial \Omega$ vanishes thanks to the boundary condition in \eqref{q periodic neumann problem: u}. Hence $\frac{d}{dt} e \leq 0$ in $(0,T)$. Since $e \geq 0$ and $e(0)=0$, we conclude that $e(t) = 0$ for all $t \in [0,T]$. Accordingly,
$u = 0$ on $[0, T] \times \overline{\Omega}$, and the $q$-periodicity of $u$ implies $u=0$ on $[0,T]\times\overline{\mathbb{S}[\phi]}$. Hence 
\[
v_q^+[\rho \circ (\phi^T)^{(-1)}]=0\qquad \mbox{in }  [0, T]\times \overline{\mathbb{S}[\phi]}\,,
\]
a fact that, as explained above, concludes the proof of the statement.
\end{proof}

Taking inspiration from Proposition \ref{prop int eq} and Lemma \ref{lem: intervibility of K_gamma}, we define the map
\begin{equation*}
    \Lambda : {\mathcal{A}}_{\partial\Omega,Q}^{1,\alpha} \times (0,+\infty)^2 \times C^{\frac{1+\alpha}{2};1+\alpha}_0([0,T]\times\partial \Omega) \times C_0^{\frac{\alpha}{2};\alpha}([0,T] \times \partial\Omega) \to C_0^{\frac{\alpha}{2};\alpha}([0,T] \times \partial\Omega)
\end{equation*}
given by
\[
\begin{aligned}
    \Lambda[\phi, \lambda^+,\lambda^-, f,g] :=& \left(I -2     \lambda_\mathrm{\bf c}[\lambda^+,\lambda^-] W_{q}^*[\phi,\cdot]\right)^{(-1)} 
    \\ &
    \left(-\frac{2}{\lambda^- + \lambda^+} \left( \lambda^+ \left( \frac{1}{2}I + W_{q}^*[\phi,\cdot]\right)\left( V_q[\phi,\cdot]^{(-1)}(f)\right) + g \right) \right)\, ,
\end{aligned}
\]
with $\lambda_\mathrm{\bf c}[\lambda^+,\lambda^-]$ as in \eqref{eq def lambda}. Then the solution $\rho^-[\phi,\lambda^+,\lambda^-,f,g]$ to the integral equation in \eqref{int eq rho-} is given by 
\begin{equation}\label{sys3}
\rho^-[\phi,\lambda^+,\lambda^-,f,g] = \Lambda[\phi, \lambda^+,\lambda^-, f,g],
\end{equation}
and if we take 
\begin{equation}\label{sys4}
    \rho^+[\phi,\lambda^+,\lambda^-,f,g] = \rho^-[\phi,\lambda^+,\lambda^-,f,g] + V_q[\phi,\cdot]^{(-1)}(f),
\end{equation}
we see, by  Proposition \ref{prop int eq}, that the pair
\[
\left(\rho^+[\phi,\lambda^+,\lambda^-,f,g],\rho^-[\phi,\lambda^+,\lambda^-,f,g]\right)
\]
is the unique solution of \eqref{sys2}.

Our next objective is to establish a regularity result for the map that takes $(\phi,\lambda^+,\lambda^-,\!f,\!g)$  to $\left(\rho^+[\phi,\lambda^+,\lambda^-,\!f,\!g],\rho^-[\phi,\lambda^+,\lambda^-,\!f,\!g]\right)$, which stems from the smooth dependence of layer potentials on perturbations in the integration's support of Theorem \ref{thm:main}, coupled with the analyticity of the inversion map in Banach algebras. Subsequently, the regularity of the mapping $(\phi,\lambda^+,\lambda^-,\!f,\!g)\mapsto \left(\rho^+[\phi,\lambda^+,\lambda^-,\!f,\!g],\rho^-[\phi,\lambda^+,\lambda^-,\!f,\!g]\right)$ will resolve into a regularity result for the mapping that relates $(\phi,\lambda^+,\lambda^-,\!f,\!g)$ with the solution of \eqref{periodicidtran}.

\begin{proposition}\label{prop:aninteq}
Let $\alpha\in(0,1)$ and $T>0$. Let $\Omega$ be as in \eqref{cond Omega}.
Then the map
\[
(\phi,\lambda^+,\lambda^-,\!f,\!g)\mapsto \left(\rho^+[\phi,\lambda^+,\lambda^-,\!f,\!g],\rho^-[\phi,\lambda^+,\lambda^-,\!f,\!g]\right)
\] is
 of class $C^\infty$ from ${\mathcal{A}}_{\partial\Omega,Q}^{1,\alpha} \times (0,+\infty)^2 \times
 C^{\frac{1+\alpha}{2};1+\alpha}_0([0,T]\times\partial \Omega) \times
  C^{\frac{\alpha}{2};\alpha}_0([0,T]\times\partial \Omega)$ to 
 $C_0^{\frac{\alpha}{2};\alpha}([0,T] \times \partial\Omega)\times C_0^{\frac{\alpha}{2};\alpha}([0,T] \times \partial\Omega)$.	
\end{proposition}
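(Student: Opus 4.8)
The plan is to exploit the explicit formulas \eqref{sys3}--\eqref{sys4} for $\rho^-$ and $\rho^+$ and to show that every factor entering $\Lambda[\phi,\lambda^+,\lambda^-,f,g]$ depends smoothly on the parameters, so that the whole expression is $C^\infty$ as a composition of $C^\infty$ maps. The two general facts I would rely on throughout are: first, that for Banach spaces $X$ and $Y$ the set of linear homeomorphisms in $\mathcal{L}(X,Y)$ is open and the inversion map $A\mapsto A^{(-1)}$ from it into $\mathcal{L}(Y,X)$ is real analytic; and second, that any continuous bilinear map between Banach spaces is real analytic, hence $C^\infty$---in particular the evaluation $(A,x)\mapsto Ax$ and the scalar multiplication $(c,A)\mapsto cA$.

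First I would record the elementary smoothness facts. The contrast parameter $\lambda_\mathrm{\bf c}[\lambda^+,\lambda^-]$ of \eqref{eq def lambda}, together with the scalars $\lambda^+$ and $1/(\lambda^-+\lambda^+)$, are real-analytic functions of $(\lambda^+,\lambda^-)\in(0,+\infty)^2$, their denominators never vanishing there; moreover $\lambda_\mathrm{\bf c}[\lambda^+,\lambda^-]\in(-1,1)$. By Theorem \ref{thm:main} the maps $\phi\mapsto V_q[\phi,\cdot]$ and $\phi\mapsto W^*_q[\phi,\cdot]$ are $C^\infty$ into the respective operator spaces. Combining $\phi\mapsto W^*_q[\phi,\cdot]$ with the smooth scalar $\lambda_\mathrm{\bf c}$ through the continuous bilinear multiplication shows that $(\phi,\lambda^+,\lambda^-)\mapsto I-2\lambda_\mathrm{\bf c}[\lambda^+,\lambda^-]\,W^*_q[\phi,\cdot]$ is $C^\infty$ into $\mathcal{L}(C^{\frac{\alpha}{2};\alpha}_0,C^{\frac{\alpha}{2};\alpha}_0)$; since $\lambda_\mathrm{\bf c}\in(-1,1)$, Lemma \ref{lem: intervibility of K_gamma} ensures this operator is a homeomorphism, so composing with the real-analytic inversion map gives that $(\phi,\lambda^+,\lambda^-)\mapsto\bigl(I-2\lambda_\mathrm{\bf c}W^*_q[\phi,\cdot]\bigr)^{(-1)}$ is $C^\infty$. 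In the same way, the invertibility of $V_q[\phi,\cdot]$ from \cite[Thm.~2]{LuMu18} and the analyticity of inversion yield that $\phi\mapsto V_q[\phi,\cdot]^{(-1)}\in\mathcal{L}(C^{\frac{1+\alpha}{2};1+\alpha}_0,C^{\frac{\alpha}{2};\alpha}_0)$ is $C^\infty$, whence $(\phi,f)\mapsto V_q[\phi,\cdot]^{(-1)}(f)\in C^{\frac{\alpha}{2};\alpha}_0$ is $C^\infty$ via the bilinear evaluation.

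Then I would assemble $\Lambda$ term by term. Writing $h:=V_q[\phi,\cdot]^{(-1)}(f)$, the quantity $\bigl(\tfrac12 I+W^*_q[\phi,\cdot]\bigr)(h)$ is $C^\infty$ in $(\phi,f)$, being the bilinear evaluation of the $C^\infty$-dependent operator $\tfrac12 I+W^*_q[\phi,\cdot]$ at the $C^\infty$-dependent argument $h$; multiplying by the smooth scalar $\lambda^+$, adding the (linear, hence smooth) dependence on $g$, and multiplying by the smooth scalar $-2/(\lambda^-+\lambda^+)$ produces a $C^\infty$ map into $C^{\frac{\alpha}{2};\alpha}_0$. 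Applying to it the inverse operator $\bigl(I-2\lambda_\mathrm{\bf c}W^*_q[\phi,\cdot]\bigr)^{(-1)}$---once more through the bilinear evaluation and the $C^\infty$ dependence established above---shows that $\rho^-=\Lambda[\phi,\lambda^+,\lambda^-,f,g]$ is $C^\infty$. Finally $\rho^+=\rho^-+h$ is $C^\infty$ as a sum of $C^\infty$ maps, and the pair $(\rho^+,\rho^-)$ has the claimed regularity.

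As for difficulties, the proof is essentially a bookkeeping of how $C^\infty$ regularity propagates through sums, products, compositions, and operator inversions; no genuinely hard analytic estimate is required, since all the heavy lifting---the smooth dependence of the periodic layer operators on $\phi$ and the needed invertibility---is already supplied by Theorem \ref{thm:main}, Lemma \ref{lem: intervibility of K_gamma}, and \cite[Thm.~2]{LuMu18}. The only points demanding care are verifying that every operator we invert is genuinely a homeomorphism on the correct pair of Schauder spaces (which is exactly why the range $\lambda_\mathrm{\bf c}\in(-1,1)$ matters and why Lemma \ref{lem: intervibility of K_gamma} was needed) and keeping track of domains and codomains so that each evaluation and composition is well posed.
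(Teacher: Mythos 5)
Your proposal is correct and follows essentially the same route as the paper's own proof: smoothness of $\phi\mapsto V_q[\phi,\cdot]$ and $\phi\mapsto W^*_q[\phi,\cdot]$ from Theorem \ref{thm:main}, real analyticity of operator inversion combined with the invertibility results of \cite[Thm.~2]{LuMu18} and Lemma \ref{lem: intervibility of K_gamma}, and propagation of regularity through the continuous bilinear evaluation and scalar-multiplication maps applied to the formulas \eqref{sys3}--\eqref{sys4}. No substantive difference from the paper's argument.
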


\begin{proof}
    By Theorem \ref{thm:main}, the map that takes $\phi$ to $V_q[\phi,\cdot]$ is of class $C^\infty$ from ${\mathcal{A}}_{\partial\Omega,Q}^{1,\alpha}$ to $\mathcal{L} \left(C^{\frac{\alpha}{2};\alpha}_0([0,T]\times\partial\Omega), C^{\frac{1+\alpha}{2};1+\alpha}_0([0,T]\times\partial\Omega)\right)$, and the map that takes $(\phi, \gamma)$ to $I -2 \gamma W_{q}^*[\phi,\cdot]$ is of class $C^\infty$ from $ {\mathcal{A}}_{\partial\Omega,Q}^{1,\alpha}\times (-1,1)$ to $\mathcal{L}\left( C^{\frac{\alpha}{2};\alpha}_0([0,T]\times\partial\Omega), C^{\frac{\alpha}{2};\alpha}_0([0,T]\times\partial\Omega)\right)$. Since the map from $(0,+\infty)^2$ to $(-1,1)$ that takes $(\lambda^+,\lambda^-)$ to $\frac{\lambda^- - \lambda^+}{\lambda^- + \lambda^+}$ is also of class $C^\infty$, we deduce that the map from ${\mathcal{A}}_{\partial\Omega,Q}^{1,\alpha}\times(0,+\infty)^2$ to $\mathcal{L}\left( C^{\frac{\alpha}{2};\alpha}_0([0,T]\times\partial\Omega), C^{\frac{\alpha}{2};\alpha}_0([0,T]\times\partial\Omega)\right)$ that takes a triple $(\phi,\lambda^+,\lambda^-)$ to
    \[I -2 \frac{\lambda^- - \lambda^+}{\lambda^- + \lambda^+} W_{q}^*[\phi,\cdot]\] is of class $C^\infty$.  
    
Now, the map that takes a linear invertible operator to its inverse is real analytic (cf.~Hille and Phillips \cite[Thms. 4.3.2 and 4.3.4]{HiPh57}), and therefore of class $C^\infty$. So, by the invertibility of the periodic single layer of \cite[Thm.~2]{LuMu18} and by Lemma \ref{lem: intervibility of K_gamma}  we deduce that the map from ${\mathcal{A}}_{\partial\Omega,Q}^{1,\alpha}$ to $\mathcal{L} \left(C^{\frac{1+\alpha}{2};1+\alpha}_0([0,T]\times\partial\Omega), C^{\frac{\alpha}{2};\alpha}_0([0,T]\times\partial\Omega)\right)$ that takes $\phi$ to $V_q[\phi,\cdot]^{(-1)}$ and the map from ${\mathcal{A}}_{\partial\Omega,Q}^{1,\alpha} \times (0,+\infty)^2$ to $\mathcal{L} \left(C^{\frac{\alpha}{2};\alpha}_0([0,T]\times\partial\Omega), C^{\frac{\alpha}{2};\alpha}_0([0,T]\times\partial\Omega)\right)$
  that takes $(\phi,\lambda^+,\lambda^-)$ to $\left(I -2 \frac{\lambda^- - \lambda^+}{\lambda^- + \lambda^+} W_{q}^*[\phi,\cdot]\right)^{(-1)}$, are both of class $C^\infty$.

Given the bilinearity and continuity of the evaluation map $(L,v)\mapsto L[v]$, which acts from 
\[
\mathcal{L} \left(C^{\frac{1+\alpha}{2};1+\alpha}_0([0,T]\times\partial\Omega), C^{\frac{\alpha}{2};\alpha}_0([0,T]\times\partial\Omega)\right)\times C^{\frac{1+\alpha}{2};1+\alpha}_0([0,T]\times\partial\Omega)
\]
to $C_0^{\frac{\alpha}{2};\alpha}([0,T] \times \partial\Omega)$, as well as from
\[
\mathcal{L}\left( C^{\frac{\alpha}{2};\alpha}_0([0,T]\times\partial\Omega), C^{\frac{\alpha}{2};\alpha}_0([0,T]\times\partial\Omega)\right)\times C^{\frac{\alpha}{2};\alpha}_0([0,T]\times\partial\Omega)
\]
to $C^{\frac{\alpha}{2};\alpha}_0([0,T]\times\partial\Omega)$, we can deduce that the mapping $(\phi,f)\mapsto V_q[\phi,\cdot]^{(-1)}(f)$ is of class $C^\infty$ from ${\mathcal{A}}_{\partial\Omega,Q}^{1,\alpha} \times C^{\frac{1+\alpha}{2};1+\alpha}_0([0,T]\times\partial \Omega)$ to $C_0^{\frac{\alpha}{2};\alpha}([0,T] \times \partial\Omega)$ and, similarly, the map
\[
(\phi,\lambda^+,f)\mapsto\lambda^+ \left( \frac{1}{2}I + W_{q}^*[\phi,\cdot]\right)\left( V_q[\phi,\cdot]^{(-1)}(f)\right)
\]
is of class $C^\infty$ from ${\mathcal{A}}_{\partial\Omega,Q}^{1,\alpha}\times (0,+\infty) \times  C^{\frac{\alpha}{2};\alpha}_0([0,T]\times\partial\Omega)$ to $C^{\frac{\alpha}{2};\alpha}_0([0,T]\times\partial\Omega)$.

By once again relying on the bilinearity and continuity of the evaluation map, we ultimately deduce that the map taking $(\phi,\lambda^+,\lambda^-,f,g)$ to
\[
\left(I -2 \frac{\lambda^- - \lambda^+}{\lambda^- + \lambda^+} W_{q}^*[\phi,\cdot]\right)^{(-1)}
\left(-\frac{2}{\lambda^- + \lambda^+} \left( \lambda^+ \left( \frac{1}{2}I + W_{q}^*[\phi,\cdot]\right)\left( V_q[\phi,\cdot]^{(-1)}(f)\right) + g \right) \right)
\]
is of class $C^\infty$, where the domain is ${\mathcal{A}}_{\partial\Omega,Q}^{1,\alpha} \times (0,+\infty)^2\times 
C^{\frac{1+\alpha}{2};1+\alpha}_0([0,T]\times\partial \Omega) \times
C^{\frac{\alpha}{2};\alpha}_0([0,T]\times\partial \Omega)$, and the codomain is $C_0^{\frac{\alpha}{2};\alpha}([0,T] \times \partial\Omega)$.

Hence, the smoothness of the map $(\phi,\lambda^+,\lambda^-,f,g)\mapsto \rho^-[\phi,\lambda^+,\lambda^-,f,g]$ follows directly from \eqref{sys3} and the definition of $\Lambda$. The smoothness of  $(\phi,\lambda^+,\lambda^-,f,g)\mapsto \rho^+[\phi,\lambda^+,\lambda^-,f,g]$ is a consequence of  \eqref{sys4}.
\end{proof}

Theorem \ref{periodiccp} provides a representation formula for the solution of problem \eqref{periodicidtran} in terms of periodic single-layer potentials, while Proposition \ref{prop:aninteq} demonstrates that the corresponding densities exhibit smooth dependence on the shape, boundary data, and {transmission} parameters. Specifically, we have the expressions
\begin{equation}\label{repfu1}
\begin{split}
    u^+[\phi,\lambda^+,\lambda^-,f,g]&(t,x)
    \\
    = \int_0^{t} \int_{\partial\Omega}&S_{q,n}(t-\tau, x - \phi(y)) \rho^+[\phi,\lambda^+,\lambda^-,f,g](\tau,y)
    \tilde \sigma_n[\phi](y)\, d\sigma_yd\tau,
\end{split}
\end{equation}
for all $(t,x) \in [0,T] \times  {\overline{\mathbb{S}[\phi]}}$, and 
\begin{equation}\label{repfu2}
\begin{split}
    u^-[\phi,\lambda^+,\lambda^-,f,g]&(t,x) 
    \\
    = \int_0^{t} \int_{\partial\Omega}&S_{q,n}(t-\tau, x - \phi(y)) \rho^-[\phi,\lambda^+,\lambda^-,f,g](\tau,y) \tilde \sigma_n[\phi](y)\, d\sigma_yd\tau,
\end{split}
\end{equation}
for all $(t,x) \in [0,T] \times   {\overline{\mathbb{S}[\phi]^-}}$,  where   $\rho^+[\phi,\lambda^+,\lambda^-,f,g]$ and $\rho^-[\phi,\lambda^+,\lambda^-,f,g]$ are maps of class $C^\infty$ with respect to the variables $(\phi,\lambda^+,\lambda^-,f,g$). We are ready to show the main result of this section, about the smooth dependence of the solution of \eqref{periodicidtran} on $(\phi,\lambda^+,\lambda^-,f,g)$.

\begin{theorem}
Let $\alpha\in(0,1)$ and $T>0$. Let $\Omega$ be as in \eqref{cond Omega}. Let $\Omega^i$ and $\Omega^e$ be two bounded open subsets of $\mathbb{R}^n$. Let ${\mathcal{B}}_{\partial\Omega,Q}^{1,\alpha}$
be the open subset of ${\mathcal{A}}_{\partial\Omega,Q}^{1,\alpha}$ consisting of those diffeomorphisms $\phi$
such that
\[
\overline{\Omega^i} \subseteq \mathbb{S}[\phi], \quad \overline{\Omega^e} \subseteq \mathbb{S}[\phi]^-.
\]
 Then, the map 
 \[
 (\phi,\lambda^+,\lambda^-,f,g) \mapsto \left(u^+[\phi,\lambda^+,\lambda^-,f,g]_{|[0,T]\times\overline{\Omega^i}},u^-[\phi,\lambda^+,\lambda^-,f,g]_{|[0,T]\times\overline{\Omega^e}}\right)
 \]
 is of class $C^\infty$ from ${\mathcal{B}}_{\partial\Omega,Q}^{1,\alpha} \times (0,+\infty)^2 \times C^{\frac{1+\alpha}{2};1+\alpha}_0([0,T]\times\partial \Omega) \times
  C^{\frac{\alpha}{2};\alpha}_0([0,T]\times\partial \Omega)$ to  $C_0^{\frac{1+\alpha}{2},1+\alpha}([0,T]\times \overline{\Omega^i})
  \times C_0^{\frac{1+\alpha}{2},1+\alpha}([0,T]\times \overline{\Omega^e})$.
\end{theorem}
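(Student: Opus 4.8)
The plan is to read the result off from the representation formulas \eqref{repfu1} and \eqref{repfu2}, combined with the smoothness of the densities already secured in Proposition \ref{prop:aninteq}. Writing
\[
u^{\pm}[\phi,\lambda^+,\lambda^-,f,g](t,x) = \int_0^{t}\int_{\partial\Omega} S_{q,n}(t-\tau,x-\phi(y))\,\rho^{\pm}[\phi,\lambda^+,\lambda^-,f,g](\tau,y)\,\tilde\sigma_n[\phi](y)\,d\sigma_y\,d\tau,
\]
I would factor the map under study as a composition. The inner factor is the $C^\infty$ map $(\phi,\lambda^+,\lambda^-,f,g)\mapsto(\rho^+,\rho^-)$ of Proposition \ref{prop:aninteq}; the outer factor is the integral operator sending a pair $(\phi,\rho)$ to the restriction of the corresponding periodic single-layer potential to $[0,T]\times\overline{\Omega^i}$ (respectively $[0,T]\times\overline{\Omega^e}$). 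Since $\phi\mapsto\tilde\sigma_n[\phi]$ is real analytic by Lemma \ref{rajacon}, it suffices to prove that each such integral operator is of class $C^\infty$ from ${\mathcal{B}}_{\partial\Omega,Q}^{1,\alpha}\times C_0^{\frac{\alpha}{2};\alpha}([0,T]\times\partial\Omega)$ into the relevant Schauder space, and then invoke that compositions of $C^\infty$ maps are $C^\infty$.

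The decisive point, and the reason the restriction to $\overline{\Omega^i}$ and $\overline{\Omega^e}$ is imposed, is that on these sets the kernel is \emph{nonsingular}. For $\phi\in{\mathcal{B}}_{\partial\Omega,Q}^{1,\alpha}$ the inclusion $\overline{\Omega^i}\subseteq\mathbb{S}[\phi]$ forces the compact set $\overline{\Omega^i}$ to lie at positive distance from the interface $\partial\mathbb{S}[\phi]=\phi(\partial\Omega)+q\mathbb{Z}^n$, so that $x-\phi(y)\notin q\mathbb{Z}^n$ for every $x\in\overline{\Omega^i}$ and $y\in\partial\Omega$; the same holds for $\overline{\Omega^e}\subseteq\mathbb{S}[\phi]^-$. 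Because $S_{q,n}$ is of class $C^\infty$ on $(\mathbb{R}\times\mathbb{R}^n)\setminus(\{0\}\times q\mathbb{Z}^n)$ (see \eqref{phinq}) and the spatial argument $x-\phi(y)$ avoids $q\mathbb{Z}^n$ uniformly, the kernel $S_{q,n}(t-\tau,x-\phi(y))$ is smooth jointly in its arguments down to $t=\tau$: there is no diagonal singularity, since for $w\notin q\mathbb{Z}^n$ the map $S_{q,n}(\cdot,w)$ extends smoothly by zero across $t=0$. Moreover, as ${\mathcal{B}}_{\partial\Omega,Q}^{1,\alpha}$ is open and $\overline{\Omega^i},\overline{\Omega^e}$ are compact, this separation is locally uniform: about any $\phi_0\in{\mathcal{B}}_{\partial\Omega,Q}^{1,\alpha}$ there is a neighborhood on which the distance between $\overline{\Omega^i}$ and $\phi(\partial\Omega)+q\mathbb{Z}^n$ stays bounded below by a fixed positive constant (only finitely many lattice translates come near the bounded set, so the infimum over $z\in\mathbb{Z}^n$ is continuous in $\phi$).

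With nonsingularity of the kernel in hand, the argument parallels the treatment of the remainder term $K[\phi,\mu]$ in the proof of Theorem \ref{thm:main}. Fixing $\phi_0$ and working in a neighborhood where the separation is uniform, I would appeal to \cite[Lemma A.2, Lemma A.3]{DaLu23} on non-autonomous composition operators and on time-dependent integral operators with nonsingular kernels to conclude that
\[
(\phi,\rho)\mapsto\Big[(t,x)\mapsto\int_0^{t}\int_{\partial\Omega}S_{q,n}(t-\tau,x-\phi(y))\,\rho(\tau,y)\,\tilde\sigma_n[\phi](y)\,d\sigma_y\,d\tau\Big]
\]
is of class $C^\infty$ into $C_0^{\frac{1+\alpha}{2};1+\alpha}([0,T]\times\overline{\Omega^i})$, where membership in the subscript-$0$ class is immediate because the time integral from $0$ to $t$ vanishes at $t=0$. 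Composing with the $C^\infty$ maps $\phi\mapsto\tilde\sigma_n[\phi]$ of Lemma \ref{rajacon} and $(\phi,\lambda^+,\lambda^-,f,g)\mapsto\rho^{\pm}$ of Proposition \ref{prop:aninteq} gives the claimed smoothness near $\phi_0$; since being $C^\infty$ is a local property, the conclusion propagates to all of ${\mathcal{B}}_{\partial\Omega,Q}^{1,\alpha}$. The analysis for $u^-$ on $\overline{\Omega^e}$ is identical.

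I expect the only genuine obstacle to be the bookkeeping behind the locally uniform separation of $\overline{\Omega^i}$ (resp.\ $\overline{\Omega^e}$) from the interface, which is what licenses applying the nonsingular-kernel machinery of \cite{DaLu23} uniformly on a neighborhood of each $\phi_0$; once this is in place, the remaining steps are routine compositions of maps already known to be smooth.
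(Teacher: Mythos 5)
Your proposal is correct and follows essentially the same route as the paper: both arguments start from the representation formulas \eqref{repfu1}--\eqref{repfu2}, observe that for $x$ in $\overline{\Omega^i}$ (resp.\ $\overline{\Omega^e}$) and $y\in\partial\Omega$ the argument $x-\phi(y)$ avoids $q\mathbb{Z}^n$ so the kernel $S_{q,n}(t-\tau,x-\phi(y))$ is nonsingular, and then combine the superposition/nonsingular-kernel lemmas of \cite{DaLu23} with Proposition \ref{prop:aninteq} and Lemma \ref{rajacon}. Your explicit discussion of the locally uniform separation of $\overline{\Omega^i}$ and $\overline{\Omega^e}$ from $\phi(\partial\Omega)+q\mathbb{Z}^n$ only makes precise what the paper encodes by taking $C^{1,\alpha}(\overline{\Omega^i}\times\partial\Omega,\mathbb{R}^n\setminus q\mathbb{Z}^n)$ as the codomain of $\phi\mapsto(x-\phi(y))$.
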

\begin{proof}
Without loss of generality we can assume that $\Omega^i$ and $\Omega^e$ are of class $C^{1,\alpha}$. The maps that associate a diffeomorphism $\phi$ with the functions
\[
\overline{\Omega^i} \times \partial \Omega \ni (x,y) \mapsto  x -\phi(y)\in \mathbb{R}^n
\]
and
\[
\overline{\Omega^e} \times \partial \Omega \ni (x,y) \mapsto  x -\phi(y)\in \mathbb{R}^n
\]
are both affine and continuous (and thus, smooth), from ${\mathcal{B}}_{\partial\Omega,Q}^{1,\alpha}$ to $C^{1,\alpha}(\overline{\Omega^i} \times \partial \Omega, \mathbb{R}^n \setminus q\mathbb{Z}^n)$ and $C^{1,\alpha}(\overline{\Omega^e} \times \partial \Omega, \mathbb{R}^n \setminus q\mathbb{Z}^n)$, respectively. 
 By arguing as in the proof of \cite[Lem. A.1 and Lem. A.3]{DaLu23} regarding the regularity of superposition operators, we deduce that the maps that take $\phi$ to the functions 
\[
    S_{q,n}(t, x-\phi(y))\qquad\forall[0,T]\times\overline{\Omega^i} \times \partial \Omega
\]
and 
\[
    S_{q,n}(t, x-\phi(y))\qquad\forall[0,T]\times\overline{\Omega^e} \times \partial \Omega
\]
are of class $C^{\infty}$ from ${\mathcal{B}}_{\partial\Omega,Q}^{1,\alpha}$ to $C_0^{\frac{1+\alpha}{2};1+\alpha}([0,T]\times (\overline{\Omega^i} \times \partial \Omega))$ and to $C_0^{\frac{1+\alpha}{2};1+\alpha}([0,T]\times (\overline{\Omega^e} \times \partial \Omega))$, respectively. Indeed, we note that the results of \cite[Lem. A.1 and Lem. A.3]{DaLu23} remain valid also in the case of a manifold with a boundary.

Then, the statement follows by the representation formulas  \eqref{repfu1}, \eqref{repfu2} for $u^\pm[\phi,\lambda^+,\lambda^-,f,g]$, by Proposition \ref{prop:aninteq} on the smoothness of $\rho^\pm[\phi,\lambda^+,\lambda^-,f,g]$, by Lemma \ref{rajacon} on the analyticity of $\tilde \sigma_n[\phi]$, and by the regularity result on integral operators with non-singular kernels of   \cite[Lem.~A.2]{DaLu23}, which continues to apply even in the case of a manifold with a boundary.
\end{proof}

\section{An expansion result by Neumann-type series} \label{s:neu}

If we consider fixed values of $\phi \in {\mathcal{A}}_{\partial\Omega,Q}^{1,\alpha}$, $f \in C^{\frac{1+\alpha}{2};1+\alpha}_0([0,T]\times\partial \Omega)$, and $g \in C^{\frac{\alpha}{2};\alpha}_0([0,T]\times \Omega)$, a combination of Proposition \ref{prop int eq} and a modified version of Proposition \ref{prop:aninteq} allows us to establish that the solution to problem \eqref{periodicidtran} exhibits analytic dependence on the {term} $\lambda_\mathrm{\bf c}[\lambda^+,\lambda^-]$. Consequently, we can express the densities as convergent power series. Alternatively, this result can be achieved more directly by employing the Neumann series Theorem.

To be more precise, we can demonstrate that locally, around a fixed pair of parameters $(\lambda^+_0,\lambda^-_0) \in (0,+\infty)^2$, the densities can be expressed {by means of} a Neumann-type series. The terms of this series involve the difference of the {terms} $\lambda_\mathrm{\bf c}[\lambda^+,\lambda^-]$ and $\lambda_\mathrm{\bf c}[\lambda^+_0,\lambda^-_0]$,  as well as iterated compositions of the operator
\begin{equation*}
\left(I - 2 \lambda_\mathrm{\bf c}[\lambda^+_0,\lambda^-_0] W^*_q[\phi,\cdot] \right)^{(-1)} \circ W^\ast_q[\phi,\cdot].   
\end{equation*}
Naturally, once we establish this result for the densities, by utilizing the representation formula of the solution in terms of space-periodic layer potentials,  we can deduce a similar result for the solution. The detailed calculation is left to the zealous reader.

We will use the following notation: Given two Banach spaces $X$ and $Y$ and a bounded linear map $T:X \to Y$, we define
\begin{equation*}
    T^j := \underbrace{ T \circ \dots \circ T}_{j-\text{times}} \quad \text{for every } j \in \mathbb{N},
\end{equation*}
with the convention that $T^0= I$.

In the theorem below, we fix $\phi\in {\mathcal{A}}_{\partial\Omega,Q}^{1,\alpha}$, $\lambda^+_0,\lambda^-_0 > 0$,  $f \in C^{\frac{1+\alpha}{2};1+\alpha}_0([0,T]\times\partial \Omega)$, and $g \in C^{\frac{\alpha}{2};\alpha}_0([0,T]\times \Omega)$ and we show a representation formula for $\rho^-[\phi,\lambda^+,\lambda^-,f,g]$ as a convergent power series depending on the difference of the {terms} $\lambda_\mathrm{\bf c}[\lambda^+,\lambda^-]$ and $\lambda_\mathrm{\bf c}[\lambda^+_0,\lambda^-_0]$. For the sake of exposition, for every $j \in {\mathbb{N}}$, we define the map
\begin{equation*}
   \mathcal{K}_j: {\mathcal{A}}_{\partial\Omega,Q}^{1,\alpha} \times (0,+\infty)^2 \to \mathcal{L}(C_0^{\frac{\alpha}{2};\alpha}([0,T] \times \partial\Omega),C_0^{\frac{\alpha}{2};\alpha}([0,T] \times \partial\Omega)),
\end{equation*}
given by 
\begin{equation}\label{eq K_j}
\mathcal{K}_j[\phi,\lambda^+_0,\lambda^-_0] := 2^j \left(\left(I - 2 \lambda_\mathrm{\bf c}[\lambda^+_0,\lambda^-_0] W^*_q[\phi,\cdot] \right)^{(-1)} {\circ}W^\ast_q[\phi,\cdot] \right)^j.
\end{equation}
Then the following holds.

\begin{theorem}\label{thm:neumser}
Let $\alpha\in(0,1)$ and $T>0$. Let $\Omega$ be as in \eqref{cond Omega}. Let  $\phi \in {\mathcal{A}}_{\partial\Omega,Q}^{1,\alpha}$, $\lambda^+_0,\lambda^-_0 > 0$, $f \in C^{\frac{1+\alpha}{2};1+\alpha}_0([0,T]\times\partial \Omega)$, and $g \in C^{\frac{\alpha}{2};\alpha}_0([0,T]\times \Omega)$ be fixed.

Then, there exists a positive constant $\varepsilon \in (0,+\infty)$ such that the following holds: For every $(\lambda^+,\lambda^-) \in (0,+\infty)^2$ such that
\begin{equation}\label{thm condition epsilon}
    |\lambda_\mathrm{\bf c}[\lambda^+,\lambda^-] - \lambda_\mathrm{\bf c}[\lambda^+_0,\lambda^-_0]| <
 \varepsilon,
\end{equation}
with $\lambda_\mathrm{\bf c}[\cdot,\cdot]$ defined by \eqref{eq def lambda}, we have
\begin{equation}\label{thm eq rho- series}
        \begin{split}
            \rho^-[\phi,\lambda^+,\lambda^-,f,g] =& \left( \sum_{j=0}^{+\infty} (\lambda_\mathrm{\bf c}[\lambda^+,\lambda^-] - \lambda_\mathrm{\bf c}[\lambda^+_0,\lambda^-_0])^j \mathcal{K}_j[\phi,\lambda^+_0,\lambda^-_0]  \right)
            \\
            &
            \circ \left(I - 2 \lambda_\mathrm{\bf c}[\lambda^+_0,\lambda^-_0] W^*_q[\phi,\cdot] \right)^{(-1)} (\rho^-_0[\phi,\lambda^+,\lambda^-,f,g]), 
        \end{split}
    \end{equation}
where the series 
\[
\sum_{j=0}^{+\infty} \zeta^j \mathcal{K}_j[\phi,\lambda^+_0,\lambda^-_0]
\]
converges normally in $\mathcal{L}(C^{\frac{\alpha}{2};\alpha}_0([0,T]\times \partial\Omega), C^{\frac{\alpha}{2};\alpha}_0([0,T]\times \partial\Omega))$ for $|\zeta|<\varepsilon$ and where
\begin{equation}\label{eq rho-0}
    \rho^-_0[\phi,\lambda^+,\lambda^-,f,g] := -\frac{2}{\lambda^- + \lambda^+} \left( \lambda^+ \left( \frac{1}{2}I + W_{q}^*[\phi,\cdot]\right)\left( V_q[\phi,\cdot]^{(-1)}(f)\right) + g \right).
\end{equation}

\end{theorem}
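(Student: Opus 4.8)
The plan is to read off the defining integral equation for $\rho^-$ from Proposition \ref{prop int eq} and then recognise the right-hand side of \eqref{thm eq rho- series} as a Neumann expansion of the inverse of the operator appearing there. Writing for brevity $\lambda_\mathrm{\bf c} := \lambda_\mathrm{\bf c}[\lambda^+,\lambda^-]$ and $\lambda_\mathrm{\bf c}^0 := \lambda_\mathrm{\bf c}[\lambda^+_0,\lambda^-_0]$, equation \eqref{int eq rho-} states that $\rho^-[\phi,\lambda^+,\lambda^-,f,g]$ is the unique solution of
\[
\bigl(I - 2\lambda_\mathrm{\bf c}\, W_q^*[\phi,\cdot]\bigr)\,\rho^- = \rho^-_0[\phi,\lambda^+,\lambda^-,f,g],
\]
where $\rho^-_0[\phi,\lambda^+,\lambda^-,f,g]$ is as in \eqref{eq rho-0}. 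Since $\phi$, $\lambda^+_0$, $\lambda^-_0$ are fixed and $\lambda_\mathrm{\bf c}^0 \in (-1,1) \subseteq [-1,1]$, Lemma \ref{lem: intervibility of K_gamma} guarantees that the operator $A := I - 2\lambda_\mathrm{\bf c}^0\, W_q^*[\phi,\cdot]$ is a linear homeomorphism of $C_0^{\frac{\alpha}{2};\alpha}([0,T]\times\partial\Omega)$ onto itself, so that $A^{(-1)}$ is a well-defined bounded operator.

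The key algebraic step is a factorisation. Setting $\zeta := \lambda_\mathrm{\bf c} - \lambda_\mathrm{\bf c}^0$ and using $\lambda_\mathrm{\bf c} = \lambda_\mathrm{\bf c}^0 + \zeta$, I would write
\[
I - 2\lambda_\mathrm{\bf c}\, W_q^*[\phi,\cdot] = A - 2\zeta\, W_q^*[\phi,\cdot] = A \circ \bigl(I - 2\zeta\, A^{(-1)} \circ W_q^*[\phi,\cdot]\bigr),
\]
so that $\bigl(I - 2\lambda_\mathrm{\bf c}\, W_q^*[\phi,\cdot]\bigr)^{(-1)} = \bigl(I - 2\zeta\, A^{(-1)}\circ W_q^*[\phi,\cdot]\bigr)^{(-1)} \circ A^{(-1)}$ as soon as the first factor on the right is invertible.

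To control that factor I would fix
\[
\varepsilon := \bigl(1 + 2\,\|A^{(-1)}\circ W_q^*[\phi,\cdot]\|\bigr)^{(-1)} > 0,
\]
where $\|\cdot\|$ denotes the operator norm in $\mathcal{L}\bigl(C_0^{\frac{\alpha}{2};\alpha}([0,T]\times\partial\Omega),C_0^{\frac{\alpha}{2};\alpha}([0,T]\times\partial\Omega)\bigr)$; this choice ensures $2\varepsilon\,\|A^{(-1)}\circ W_q^*[\phi,\cdot]\| < 1$. Then for $|\zeta| < \varepsilon$, i.e.\ under condition \eqref{thm condition epsilon}, the Neumann series theorem applies and gives
\[
\bigl(I - 2\zeta\, A^{(-1)}\circ W_q^*[\phi,\cdot]\bigr)^{(-1)} = \sum_{j=0}^{+\infty} \bigl(2\zeta\, A^{(-1)}\circ W_q^*[\phi,\cdot]\bigr)^j = \sum_{j=0}^{+\infty} \zeta^j\, \mathcal{K}_j[\phi,\lambda^+_0,\lambda^-_0],
\]
the last identity being exactly the definition \eqref{eq K_j} of $\mathcal{K}_j$. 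Normal convergence of $\sum_{j} w^j\,\mathcal{K}_j[\phi,\lambda^+_0,\lambda^-_0]$ for $|w|<\varepsilon$ then follows by comparison with the geometric series $\sum_j \bigl(2|w|\,\|A^{(-1)}\circ W_q^*[\phi,\cdot]\|\bigr)^j$, since $\|\mathcal{K}_j[\phi,\lambda^+_0,\lambda^-_0]\| \le 2^j\,\|A^{(-1)}\circ W_q^*[\phi,\cdot]\|^j$. Substituting back into $\rho^- = \bigl(I - 2\lambda_\mathrm{\bf c}\, W_q^*[\phi,\cdot]\bigr)^{(-1)}\rho^-_0 = \bigl(\sum_j \zeta^j\,\mathcal{K}_j[\phi,\lambda^+_0,\lambda^-_0]\bigr)\circ A^{(-1)}(\rho^-_0)$ yields precisely \eqref{thm eq rho- series}.

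Most of the argument is routine once the factorisation is in place, and I do not expect a genuine obstacle. The one point requiring care is the invertibility of $A$, which is exactly the content of Lemma \ref{lem: intervibility of K_gamma} (which also covers the endpoints $\lambda_\mathrm{\bf c}^0 = \pm 1$, although here $\lambda_\mathrm{\bf c}^0$ lies strictly inside $(-1,1)$ because $\lambda^\pm_0 > 0$). The only bookkeeping to verify is that the Neumann-series terms coincide with the operators $\mathcal{K}_j$ defined in \eqref{eq K_j} and that the radius of normal convergence is bounded below by the $\varepsilon$ just chosen.
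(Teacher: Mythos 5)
Your proposal is correct and follows essentially the same route as the paper: the same factorisation $I - 2\lambda_\mathrm{\bf c}W_q^*[\phi,\cdot] = \bigl(I - 2\lambda_\mathrm{\bf c}^0 W_q^*[\phi,\cdot]\bigr)\circ\bigl(I - 2\zeta\,(I - 2\lambda_\mathrm{\bf c}^0 W_q^*[\phi,\cdot])^{(-1)}\circ W_q^*[\phi,\cdot]\bigr)$, followed by the Neumann series theorem. The only (immaterial) difference is your choice of $\varepsilon$, which differs from the paper's $\varepsilon = \bigl(2\|(I - 2\lambda_\mathrm{\bf c}^0 W_q^*[\phi,\cdot])^{(-1)}\circ W_q^*[\phi,\cdot]\|\bigr)^{-1}$ but serves the same purpose.
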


\begin{proof}
    Let $\phi \in {\mathcal{A}}_{\partial\Omega,Q}^{1,\alpha}$, $\lambda^+_0,\lambda^-_0 > 0$,  $f \in C^{\frac{1+\alpha}{2};1+\alpha}_0([0,T]\times\partial \Omega)$, and $g \in C^{\frac{\alpha}{2};\alpha}_0([0,T]\times \partial\Omega)$. We first notice that, by the definition of $\rho^-_0$ in \eqref{eq rho-0}, we can rewrite \eqref{int eq rho-} as
    \begin{equation}\label{eq I -2lambda W* rho- = rho-0}
        \left(I - 2 \lambda_\mathrm{\bf c}[\lambda^+,\lambda^-] W^*_q[\phi,\cdot] \right) \rho^-[\phi,\lambda^+,\lambda^-,f,g] = \rho^-_0[\phi,\lambda^+,\lambda^-,f,g] \, ,
    \end{equation}
    for every $(\lambda^+,\lambda^-) \in (0,+\infty)^2$. We now consider the operator on the left-hand side of \eqref{eq I -2lambda W* rho- = rho-0}, which is $I - 2 \lambda_\mathrm{\bf c}[\lambda^+,\lambda^-] W^*_q[\phi,\cdot]: C^{\frac{\alpha}{2};\alpha}_0([0,T]\times \partial\Omega) \to C^{\frac{\alpha}{2};\alpha}_0([0,T]\times \partial\Omega)$. By adding and subtracting the term $2 \lambda_\mathrm{\bf c}[\lambda^+_0,\lambda^-_0] W^*_q[\phi,\cdot]$ and factoring out the operator $I - 2 \lambda_\mathrm{\bf c}[\lambda^+_0,\lambda^-_0] W^*_q[\phi,\cdot]$, we can rewrite this operator as follows:    \begin{equation}\label{identity for I-2lambda_c W^*}
    \begin{split}
        I - 2& \lambda_\mathrm{\bf c}[\lambda^+,\lambda^-] W^*_q[\phi,\cdot] 
        \\
        =& I - 2 \lambda_\mathrm{\bf c}[\lambda^+_0,\lambda^-_0] W^*_q[\phi,\cdot] - 2 (\lambda_\mathrm{\bf c}[\lambda^+,\lambda^-] - \lambda_\mathrm{\bf c}[\lambda^+_0,\lambda^-_0]) W^*_q[\phi,\cdot]
        \\
        =&\left( I - 2 \lambda_\mathrm{\bf c}[\lambda^+_0,\lambda^-_0] W^*_q[\phi,\cdot]\right)
        \\
        &\circ \left(I - 2 (\lambda_\mathrm{\bf c}[\lambda^+,\lambda^-] - \lambda_\mathrm{\bf c}[\lambda^+_0,\lambda^-_0]) \left(I - 2 \lambda_\mathrm{\bf c}[\lambda^+_0,\lambda^-_0] W^*_q[\phi,\cdot]\right)^{(-1)} {\circ}W^*_q[\phi,\cdot]\right).
    \end{split}
    \end{equation}
    In particular, by \eqref{identity for I-2lambda_c W^*},
    we deduce that \begin{equation}\label{identity for (I-2lambda_c W^*)^(-1)}
        \begin{split}
            &\left(I - 2 \lambda_\mathrm{\bf c}[\lambda^+,\lambda^-] W^*_q[\phi,\cdot]\right)^{(-1)} 
            \\
            & \qquad=\left(I - 2 (\lambda_\mathrm{\bf c}[\lambda^+,\lambda^-] - \lambda_\mathrm{\bf c}[\lambda^+_0,\lambda^-_0]) \left(I - 2 \lambda_\mathrm{\bf c}[\lambda^+_0,\lambda^-_0] W^*_q[\phi,\cdot]\right)^{(-1)} {\circ} W^*_q[\phi,\cdot]\right)^{(-1)} 
            \\
            &\qquad\quad\circ \left( I - 2 \lambda_\mathrm{\bf c}[\lambda^+_0,\lambda^-_0] W^*_q[\phi,\cdot]\right)^{(-1)}.
        \end{split}
    \end{equation}
    Then, if we choose $\varepsilon>0$ small enough, for example
    \begin{equation}\label{eq varepsilon}
        \varepsilon := \frac{1}{2 \left\| \left(I - 2 \lambda_\mathrm{\bf c}[\lambda^+_0,\lambda^-_0] W^*_q[\phi,\cdot]\right)^{(-1)} {\circ}W^*_q[\phi,\cdot]\right\|_{\mathcal{L}\left( C^{\frac{\alpha}{2};\alpha}_0([0,T]\times\partial\Omega), C^{\frac{\alpha}{2};\alpha}_0([0,T]\times\partial\Omega)\right)}},
    \end{equation}
    we have that, for every $(\lambda^+,\lambda^-) \in (0,+\infty)^2$ such that \eqref{thm condition epsilon} holds, the inverse of the operator
    \begin{equation*}
        I - 2 (\lambda_\mathrm{\bf c}[\lambda^+,\lambda^-] - \lambda_\mathrm{\bf c}[\lambda^+_0,\lambda^-_0]) \left(I - 2 \lambda_\mathrm{\bf c}[\lambda^+_0,\lambda^-_0] W^*_q[\phi,\cdot]\right)^{(-1)}{\circ} W^*_q[\phi,\cdot]
    \end{equation*}
    from $C^{\frac{\alpha}{2};\alpha}_0([0,T]\times \partial\Omega)$ into itself can be written as a normally convergent Neumann series in $\mathcal{L}\left(C^{\frac{\alpha}{2};\alpha}_0([0,T]\times \partial\Omega), C^{\frac{\alpha}{2};\alpha}_0([0,T]\times\partial\Omega)\right)$. In fact, by \eqref{thm condition epsilon} and \eqref{eq varepsilon}, and by the {Neumann series Theorem}, we have that
    \begin{equation}\label{eq neumann series operator}
        \begin{split}
            \left(I - 2 (\lambda_\mathrm{\bf c}[\lambda^+,\lambda^-] - \lambda_\mathrm{\bf c}[\lambda^+_0,\lambda^-_0]) \left(I - 2 \lambda_\mathrm{\bf c}[\lambda^+_0,\lambda^-_0] W^*_q[\phi,\cdot]\right)^{(-1)} {\circ}W^*_q[\phi,\cdot]\right)^{(-1)}  
            \\
            = \sum_{j=0}^{+\infty} (\lambda_\mathrm{\bf c}[\lambda^+,\lambda^-] - \lambda_\mathrm{\bf c}[\lambda^+_0,\lambda^-_0])^j \mathcal{K}_j[\phi,\lambda^+_0,\lambda^-_0]\, ,   
        \end{split}
    \end{equation}
    where for each $j \in \mathbb{N}$ the operator $\mathcal{K}_j[\cdot,\cdot,\cdot]$ is defined by \eqref{eq K_j}. Finally, \eqref{eq I -2lambda W* rho- = rho-0}, \eqref{identity for (I-2lambda_c W^*)^(-1)} and \eqref{eq neumann series operator} yield to the validity of \eqref{thm eq rho- series}.
\end{proof}

{\begin{remark}
Let the assumptions of Theorem \ref{thm:neumser} hold. By equations \eqref{thm eq rho- series} and {\eqref{eq rho-0}}, we have
   \[
   \begin{split}
            \rho^-[\phi,\lambda^+,\lambda^-,f,g] =&  -\frac{2 \lambda^+}{\lambda^- + \lambda^+} \left( \sum_{j=0}^{+\infty} (\lambda_\mathrm{\bf c}[\lambda^+,\lambda^-] - \lambda_\mathrm{\bf c}[\lambda^+_0,\lambda^-_0])^j \mathcal{K}_j[\phi,\lambda^+_0,\lambda^-_0]  \right)
            \\
            &
            \circ \left(I - 2 \lambda_\mathrm{\bf c}[\lambda^+_0,\lambda^-_0] W^*_q[\phi,\cdot] \right)^{(-1)}  \left( \frac{1}{2}I + W_{q}^*[\phi,\cdot]\right)\left( V_q[\phi,\cdot]^{(-1)}(f)\right) )\\
            & -\frac{2}{\lambda^- + \lambda^+}\left( \sum_{j=0}^{+\infty} (\lambda_\mathrm{\bf c}[\lambda^+,\lambda^-] - \lambda_\mathrm{\bf c}[\lambda^+_0,\lambda^-_0])^j \mathcal{K}_j[\phi,\lambda^+_0,\lambda^-_0]  \right)
            \\
            &
            \circ \left(I - 2 \lambda_\mathrm{\bf c}[\lambda^+_0,\lambda^-_0] W^*_q[\phi,\cdot] \right)^{(-1)} \left(  g \right)\, ,
        \end{split}
  \]
 for every  $(\lambda^+,\lambda^-) \in (0,+\infty)^2$ such that condition \eqref{thm condition epsilon} holds.
\end{remark}}

\subsection*{Acknowledgment}

The authors are members of the ``Gruppo Nazionale per l'Analisi Matematica, la Probabilit\`a e le loro Applicazioni'' (GNAMPA) of the ``Istituto Nazionale di Alta Matematica'' (INdAM). M.D.R., P.L.~and P.M.~acknowledge the support of the ``INdAM GNAMPA Project'' codice CUP\_E53C22001930001  ``Operatori differenziali e integrali in geometria spettrale'' {and of the project funded by the EuropeanUnion -- NextGenerationEU under the National Recovery and Resilience Plan (NRRP), Mission 4 Component 2 Investment 1.1 - Call PRIN 2022 No. 104 of February 2, 2022 of Italian Ministry of University and Research; Project 2022SENJZ3 (subject area: PE - Physical Sciences and Engineering) ``Perturbation problems and asymptotics for elliptic differential equations: variational and potential theoretic methods''.} P.M.~and R.M.~also acknowledge  the support of  the SPIN Project  ``DOMain perturbation problems and INteractions Of scales - DOMINO''  of the Ca' Foscari University of Venice. P.M.~also acknowledges the support from EU through the H2020-MSCA-RISE-2020 project EffectFact, 
Grant agreement ID: 101008140. Part of this work was done while R.M.~was visiting C3M - Centre for Computational Continuum Mechanics (Slovenia). R.M.~wishes to thank C3M  for the kind hospitality.

\end{document}